\newcommand{\bfe}{\mathbf{e}}
\newcommand{\bfF}{\mathbf{F}}
\newcommand{\bfW}{\mathbf{W}}
\newcommand{\ir}{\Bbb{R}}
\newcommand{\bbZ}{\Bbb{Z}}
\newcommand{\bbN}{\Bbb{N}}
\newcommand{\norm}[1]{|{#1}|}
\newcommand{\Pro}{{\mathbf P}}
\newcommand{\md}{\,\text{d}}
\newcommand{\myd}{\text{d}}
\newcommand{\SINR}{\text{SINR}}
\newcommand{\SNR}{\text{SNR}}
\renewcommand{\E}{\mathbf{E}}
\newcommand{\ind}{\mathds{1}}
\newcommand{\Gsinr}{{\mathbb G}} %_{\text{SINR}}}
\newcommand{\calH}{{\cal H}}
\newcommand{\calL}{{\cal L}}
\newcommand{\LD}{L}
\newcommand{\Trial}{{\cal T}}
\newcommand{\ld}{\ell}
\newcommand{\qed}{\hfill $\Box$}
\shorttitle{Optimal Paths on the Space--Time SINR Random Graphs}
\begin{document}

%%%%%%%%%%%%%%%%%%%%%%%%%%%%%%%%TITLE%%%%%%%%%%%%%%%%%%%%%%%%%%%%%%%%%%%%%%%%

\title{Optimal Paths on the Space--Time SINR Random
  Graph}\\[2ex]
\authorone[INRIA/ENS]{Fran\c cois Baccelli\vspace{-5ex}}
\addressone{ENS DI TREC, 45 rue d'ulm, 75230 Paris, FRANCE.}
\noindent\authortwo[INRIA/ENS and Math. Inst. University of
Wroc\l aw]{Bart\L omiej  B\L aszczyszyn\vspace{-5ex}}
\addresstwo{ENS DI TREC, 45 rue d'ulm, 75230 Paris, FRANCE.}
\noindent\authorthree[INRIA/ENS and Sharif University of
Technology]{Mir Omid Haji Mirsadeghi}
\addressthree{ENS  DI TREC, 45 rue d'ulm, 75230 Paris, FRANCE. The
work of this author is part of a joint PhD programme with a co-advising
by Prof. Amir Daneshgar in Tehran}
\vspace{-4ex}

%%%%%%%%%%%%%%%%%%%%%%%%%%%%%%ABSTRACT%%%%%%%%%%%%%%%%%%%%%%%%%%%%%%%%%%%%%%%
\begin{abstract}
We analyze a class of 
Signal-to-Interference-and-Noise-Ratio 
(SINR) random graphs. These random graphs arise in the modeling
packet transmissions in wireless networks. 
In contrast to previous studies on the SINR graphs, we consider 
both a space and a time dimension. The spatial aspect originates
from the random locations of the network nodes in the Euclidean
plane. The time aspect stems from the random transmission policy  
followed by each network node and from the time variations of the
wireless channel characteristics.
The combination of these random space and time aspects  leads to 
fluctuations of the SINR experienced by the wireless channels,
which in turn determine the progression of packets in space and
time in such a network.  

This paper studies optimal paths in such wireless networks in 
terms of first passage percolation on this random graph. We 
establish both ``positive'' and ``negative'' results on the associated
time constant. The latter determines the asymptotics of the
minimum delay required by a packet to progress from a source node to
a destination node when the Euclidean distance between the two tends
to infinity. The main negative result states that this time constant is
infinite on the random graph associated with a Poisson point
process under natural assumptions on the wireless channels.  
The main positive result states that when adding a periodic node
infrastructure of arbitrarily small intensity to the Poisson point
process, the time constant is positive and finite.
\end{abstract}
\keywords{Poisson point process, random graph, first passage
  percolation, shot-noise process, SINR}
\ams{60D05, 05C80}{90C27, 60G55}

\section{Introduction}
There is a rich literature on random graphs generated
over a random point process. These graphs are often
motivated by physical, biological or social networks. 
Many interesting large scale properties of these
networks related to connectivity have been studied in terms 
of the percolation of the associated graphs.  An early example of 
such a study can be found in~\cite{Gilbert61}
where the connectivity of large networks
was defined as the supercritical phase in what is today called
the continuum (Boolean) percolation model.
More recently, a random SINR graph model for wireless networks
was studied with the same perspective in~\cite{Dousse_etal_TON,Dousse_etal}.

The routing, and more precisely, the speed of delivery of information
in networks is another example of problems, which motivated the 
study of the random graphs. The main object in this context 
is the evaluation of the so called {\em time constant},
which gives the asymptotic behavior of the number of edges (hops)
in the paths (optimal or produced by some particular routing protocol)
joining two given nodes in function of the (Euclidean) distance between
these nodes, when this distance tends to infinity.
In the case of a shortest (in terms of the number of hops)
path, this problem is usually called the {\em first passage percolation
problem} and was originally stated by Broadbent and Hammersley
in~\cite{Broad_Hamm57} to study the spread of fluid in a porous medium. 
More recently, in~\cite{Kleinberg,BordenaveAaP}, such time constants
were studied on so called {\em small word graphs}, motivated by routing
in certain social networks, where any two given nodes are joined by an
edge independently with a probability that decays as some power function
with the Euclidean distance between them. The complete graph on a Poisson p.p. 
with ``nearest neighbor'' routing policy was studied in
this context in~\cite{RST}. The first passage percolation problem on the 
Poisson-Delaunay graph was considered in~\cite{VahidiAsl_Wierman90,Pimentel2006}. 
In the case of graphs whose edges are marked by some weights,
one can extend the notion of time constant by studying the sum of the edge
weights. First passage percolation on the complete Poisson p.p. graph,
with weights proportional to some power of the distance between the nodes
was studied in~\cite{HowardNewman1997}.

The present paper focuses on the speed of delivery of information
in SINR graphs. In contrast to previous studies of this subject, in particular
to~\cite{Dousse_etal_TON,Dousse_etal}, we consider graphs with 
{\em space-time} vertexes. This new model is motivated by multihop routing
protocols used in wireless ad-hoc networks. In this framework, the random point
process on the plane describes the locations of the users of 
an ad-hoc network and the discrete time dimension corresponds to 
successive time slots in which these nodes exchange information (here packets).
As in~\cite{allerton} we assume the spatial Aloha policy to decide
which node transmit at a given time slot. We also assume some space-time
fading model (already used e.g. in~\cite{compj}) to describe the variability 
of the wireless channel conditions (see e.g.~\cite{TseViswanath2005}).
In this {\em space-time SINR graph}, a directed edge represents 
the feasibility of the wireless transmission between two given network
nodes at a given time.
More precisely, the direct transmission of a packet is succeeds
between two nodes in a given time slot if the
ratio of the power of the signal between these nodes 
to the interference and noise at the receiver is larger than a
threshold at this time slot. This definition has an information
theoretic basis (see e.g. \cite{TseViswanath2005}). It is
rigorously defined below using some power path-loss model
and an associated shot-noise model representing the interference. 

We study various problems on this random graph including the law
of its in- and out-degree, the number of paths originating
from (or terminating at) a typical node or its connectedness. 
The most important results bear on the first passage percolation problem
in this graph. In the case of Poisson p.p. for the node locations, we show 
that the time constant is infinite. We then show that when adding a
periodic node infrastructure of arbitrarily small intensity to the Poisson
point process, the time constant is positive and finite.
These results lead to bounds on the delays in ad-hoc networks which
hold for all routing algorithms. This  subject, or more generally, 
the question of the speed of the delivery of information in large 
wireless ad-hoc networks currently receives a lot of attention in
the engineering literature see e.g.~\cite{net:Ganti09spaswin,JMMR2009}.

The paper is organized as follows.
In Section~\ref{sec:oppstomo} we introduce the space-time SINR graph model.
The results are presented in Section~\ref{s.results},
Most of the proofs are deferred to Section~\ref{s.proofs}.
Some implications on routing in ad-hoc networks
are presented in Section~\ref{s.remarks}.

\section{The Model}
\label{sec:oppstomo}
\subsection{Probabilistic Assumptions}
\label{ss.Probab.assumptions}
Throughout the paper we consider a simple, stationary, independently
marked (i.m.) point process (p.p.)
$\widetilde\Phi=\{(X_i,\bfe_i,\bfF_i,\bfW_i)\}$
with finite,  positive intensity $\lambda$ on $\ir^2$. In this model,
\begin{itemize}
\item $\Phi=\{X_i\}$ denotes the locations of the network nodes on the
  plane $\ir^2$. The following three cases regarding the distribution
  of $\Phi$ will be considered:
\begin{itemize}
\item[{\bf General p.p.}:] $\Phi$ is a general (stationary, non-null, with finite intensity) p.p.,
\item[{\bf Poisson p.p.}:] $\Phi$ is a Poisson p.p.,
\item[{\bf Poisson$+$Grid p.p.}:]  $\Phi=\Phi_M+\Phi_{G}$ is the superposition of  two
  independent p.p.s; where $\Phi_M$ denotes a stationary Poisson
  p.p. with finite, non-null intensity $\lambda_M$ and 
$\Phi_{G}=s\bbZ^2+U_G$ a stationary, periodic p.p., whose nodes
constitute a square grid with edge length $s$,  randomly shifted by
the vector $U_G$ that is uniformly distributed in $[0,s]^2$
(this makes $\Phi_G$  stationary). Note that the intensity of $\Phi_G$ is 
$\lambda_G=1/s^2$.
\end{itemize}
\item  $\bfe_i=\{e_i(n)\}_{\in n\in\bbZ}$, where
  $\bbZ=\{\ldots,-1,0,1,\ldots\}$ denotes integers;
the variables $\{e_i(n):i,n\}$ are 
i.i.d. (in $n$ and $i$) Bernoulli random variables (r.v.s) with
$\Pro\{\, e=1\,\}=1-\Pro\{\,e=0\,\}=p$, where  $e$  denotes the
generic r.v. for this family. We always assume $0<p<1$.
The variable $e_i(n)$ represents the {\em medium access indicator}
of node $X_i$ at time $n$; it says whether the node transmits (case $e=1$)
or not at time $n$.
\item  $\bfW_i=\{W_i(n)\}_n$; $\{W_i(n):i,n\}$  is
a  family of   non-negative i.i.d. r.v.s with some arbitrary distribution. 
The variable $W_i(n)$ represents the power of the  {\em thermal noise}
at node $X_i$ and at time $n$. Let $W$ denote the generic r.v. for
this family.
\item $\bfF_i=\{F_{i,j}(n)\}_{j,n}$; $\{F_{i,j}(n):i,j,n\}$ is a family of
non-negative i.i.d. r.v.s. The variable $F_{i,j}(n)$ represents
the quality of the radio channel (also called {\em fading}) 
from node $X_i\in \Phi$ to node $X_j\in \Phi$ at time $n$.
The following two cases regarding the distribution of $F$ (denoting the
generic random variable for this family) will be considered:
\begin{itemize}
\item[{\bf General fading}:] when $F$ has some arbitrary distribution
  with finite mean.
\item[{\bf Exponential fading}:] when $F$ has  {\em exponential}
distribution with mean $1/\mu$.~(${}^*$)\footnote{(${}^*$)~In wireless signal
propagation models, the exponential distribution
appears naturally as the square power of the norm 
of a complex random variable, whose real and imaginary components are
i.i.d. Gaussian. I this case one often speaks about {\em Rayleigh
fading model} because  the norm (absolute value) of such a complex random
variable is Rayleigh-distributed; 
see e.g.~\cite[p.~50 and~501]{TseViswanath2005}.} 
\end{itemize}
\end{itemize}
To complete the probabilistic description of the model we assume that,
given $\Phi$, the random elements $\{\bfe_i\}_i, \{\bfW_i\}_i$ and $\{\bfF_i\}_i$
are independent. For more on this framework, which is classical, see
e.g. or \cite{bbm06,JSAC,compj}.

Our stationary i.m. p.p. $\tilde \Phi$ is considered on some
probability space with probability $\Pro$. We will denote by
$\Pro^0$ the Palm probability with respect
to $\Phi$; see~\cite[Ch.13]{DVJII2008}. 
Recall that it can be interpreted as the conditional
probability given $\Phi$ has a point at the origin  $0$ of the plane.
We will denote this point (considered under $\Pro^0$) by $X_0$
and call it the {\em typical node}.
Under $\Pro^0$ $\tilde\Phi$ is also an i.m. p.p. with marks distributed as in
the original law.  Moreover, in the case of Poisson p.p.s,
the distribution of $\Phi$ under $\Pro^0$ is equal to the distribution
of $\Phi\cup\{X_0=0\}$ under the stationary probability $\Pro$
(cf. the Slivnyak-Mecke Theorem~\cite[p.281]{DVJII2008}).

\subsection{SINR Marks}
\label{ss.SINR}
Given the i.m.p.p. $\tilde\Phi$ described above, we construct another 
family of random variables $\{\SINR_{ij}(n):i,j,n\}$, which will 
be interpreted as the SINR observed in the channel form $X_i\in\Phi$ to
$X_j\in\Phi$ at time $n$.  These variables, which have an information
theoretic background, will be used to assess the
success of transmissions. For defining these variables, we give ourselves
some non-decreasing function $l:\ir^+=\{t:t\ge0\} \to \ir^+$
that we call the {\em path-loss function}.
%$\int_\epsilon^\infty t/l(t)\md t<\infty$ for any  $\epsilon>0$,
A special example considered in this paper (and commonly accepted in
the wireless communication context) is  
\begin{equation}\label{e.path-loss}
l(r)=(Ar)^\beta\qquad  \text{with some $A>0$ and $\beta>2$}\,.
\end{equation}
Denote by $\Phi^1(n)=\{X_i: e_i(n)=1\}$ the point process
of {\em transmitters} at time slot $n$ and by $\Phi^0(n)=\{X_i: e_i(n)=0\}$
that of (potential) {\em receivers}.
For a given pair receiver $X_j\in\Phi^0$ and transmitter $X_i\in\Phi^1(n)$, 
we will assume that $X_j$ receives a signal from $X_i$
with power $F_{i,j}(n)/l(\norm{X_j-X_i})$ at time $n$.
Node $X_j$ also receives signals from {\em other} transmitters
$X_k\in\Phi^1(n)$, $X_k\not=X_i$ at time $n$. 
The total received power is equal to 
$$ I_{i,j}(n)=\sum_{X_k\in \Phi^1(n)\setminus\{X_i\}} F_{k,j}(n)/l(\norm{X_k-X_j})\, .$$
Let also
$$I_{j}(n)=\sum_{X_k\in \Phi^1(n)\setminus\{X_j\}} F_{k,j}(n)/l(\norm{X_k-X_j})\,.$$
Both $I_{i,j}(n)$ and $I_{j}(n)$ are {\em  shot-noise}
r.v.s generated by  $\Phi^1(n)$, the fading marks and the path-loss
function. The are infinite sums of non-negative r.v.s. In order to
check whether these r.v.s  are a.s. finite, one can
use the Campbell-Little-Mecke formula 
(Campbell for short; cf.~\cite[Prop.~13.3.II]{DVJII2008}), which implies that
\begin{equation}\label{e.mean_sn}
\E^0\Bigl[\sum_{X_k\in \Phi^1(n), \norm{X_k}>\epsilon}
F_{k,0}(n)/l(\norm{X_k})\Bigr]=p\E[F]
\int_{\ir^2\setminus[0,\epsilon]^2}1/l(\norm{x})\breve M_{[2]}(\myd x)\,,
\end{equation}
where $\breve M_2(\cdot)$ is the {\em reduced second order moment measure}
of $\Phi$ (cf~\cite[p.~238]{DVJII2008}).
In what follows, we will always tacitly assume that 
$l(\cdot), \Phi$ are such that the integral in the right-hand side
of~(\ref{e.mean_sn}) is finite for some $\epsilon\ge0$,
which implies that $I_0(n)$ is almost surely (a.s.) finite under
$\Pro^0$ for all $n$ as well as all  $I_{j}(n), I_{i,j}(n)$ under $\Pro$. 
If $\Phi$ is the homogeneous Poisson p.p., we have
$\breve M_{[2]}(\myd x)=\lambda \md x$ and it is easy see that the
we have finiteness for $l(\cdot)$ given by~(\ref{e.path-loss})
for all $\epsilon>0$.
It is also relatively easy to see that it holds for the Poisson$+$Grid
p.p. $\Phi=\Phi_M+\Phi_G$.

The {\em SINR  at the receiver $X_j\in\Phi^0(n)$ with respect to transmitter
$X_i\in\Phi^1(n)$, at time $n$} is defined as 
\begin{equation}\label{eq:SINR}
\SINR_{i,j}(n)=\frac{F_{i,j}(n)/l(\norm{X_i-X_j})}{W_j(n)+I_{i,j}(n)}\,.
\end{equation}

\subsection{Space-Time SINR Graph}
%\section{SINR Graph and its Paths}
\label{sec:oppTSSDiR}

Let
\begin{equation}\label{e.delta}
\delta_{i,j}(n)=
\begin{cases}\ind(\SINR_{i,j}\ge T)&\text{if}\   e_i(n)=1,e_j(n)=0, i\not=j,\\
1& \text{if}\ i=j,\\
0& \text{otherwise,}
\end{cases}
\end{equation}
where  $T>0$ is a threshold assumed to be 
some given constant throughout the paper.
We define the space-time SINR graph $\Gsinr$ as the {\em directed graph} 
with the set of vertexes $\Phi \times \bbZ$ and a 
directed edge from $(X_i,n)$ to $(X_j,n+1)$ if $\delta_{i,j}(n)=1$.

Let us stress an important convention in our terminology.
By network node, or point, we understand a point of $\Phi$. 
A (graph) vertex is an element of $\Phi\times\bbZ$;
i.e. it represents some network node at some time.
The existence of a graph edge is to be interpreted as the possibility 
of a successful communication between two network nodes (those 
involved in the edge) at time $n$. This can be rephrased as follows.
Suppose that at time $n$ the network node $X_i$ has a packet
(containing some information). Then the set of graph neighbors of the
vertex $(X_i,n)$ describes all the nodes that can decode this packet at time
$n+1$. Thus any path on the graph $\Gsinr$ represents some possible
route of the packet in space and time.

\section{Results}
\label{s.results}
In this section we present our results on $\Gsinr$.

\subsection{Existence of Paths}

All the results of this section are obtained under 
the general p.p. and fading assumptions of Section~\ref{sec:oppstomo},
under the assumption that the finiteness of the expression in~(\ref{e.mean_sn})
is granted.

Note first that $\Gsinr$ has no isolated nodes in the usual sense. Indeed, we 
have always $(X_i,n)$ connected to $(X_i,n+1)$.  
We will consider directed paths on $\Gsinr$ and call them paths for
short. Note that these paths are self-avoiding due to the fact that
there are no loops in the time dimension.

Denote by $\calH_{i}^{out,k}(n)$ 
the number of paths of length $k$ (i.e. with $k$ edges) 
{\em originating} from $(X_i,n)$. Similarly, denote by  $\calH_{i}^{in,k}(n)$ 
the number of such path {\em terminating} at $(X_i,n)$.
In particular  $\calH_{i}^{out}(n)=\calH_{i}^{out,1}(n)$
and $\calH_{i}^{in}(n)=\calH_{i}^{in,1}(n)$ are respectively, the out-
and in-degree of the node $(X_i,n)$.
\begin{lemma}
\label{lechopp:indeg-notime}
For a general p.p. $\Phi$ and a general fading model, the 
in-degree $\calH_i^{in}$ of any node of $\Gsinr$
is bounded from above by the constant $\xi=1/T +2$.
\end{lemma}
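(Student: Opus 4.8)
The plan is to bound the in-degree of a vertex $(X_j, n)$ by counting how many transmitters $X_i$ can simultaneously succeed in reaching the receiver $X_j$ at time $n$. The key physical intuition is that each successful transmission to $X_j$ requires the SINR threshold $T$ to be met, but the signal powers from all competing transmitters contribute to the interference $I_j(n)$. If too many transmitters were to succeed, their combined signal would create so much mutual interference that the SINR condition could not hold for all of them. So the obstruction is fundamentally a ``sharing of the interference budget'' argument.

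First I would fix a receiver $X_j \in \Phi^0(n)$ and consider the set $S$ of transmitters $X_i \in \Phi^1(n)$ with $\delta_{i,j}(n) = 1$, i.e. those with $\SINR_{i,j}(n) \ge T$. For each such $X_i$, writing $P_i = F_{i,j}(n)/l(\norm{X_i - X_j})$ for the received signal power, the success condition reads
\begin{equation}\label{eq:succ}
\frac{P_i}{W_j(n) + I_{i,j}(n)} \ge T\,,\qquad\text{where}\quad I_{i,j}(n) = I_j(n) - P_i\,.
\end{equation}
Here I use that $I_{i,j}(n)$, the interference seen by $X_j$ when $X_i$ is the intended transmitter, equals the total received power $I_j(n)$ minus the contribution $P_i$ of $X_i$ itself. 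Rearranging~\eqref{eq:succ} gives $P_i \ge T\bigl(W_j(n) + I_j(n) - P_i\bigr)$, hence $P_i(1+T) \ge T\bigl(W_j(n) + I_j(n)\bigr)$, and therefore
\begin{equation}\label{eq:lowerbd}
P_i \ge \frac{T}{1+T}\bigl(W_j(n) + I_j(n)\bigr) \ge \frac{T}{1+T}\,I_j(n)\,.
\end{equation}
Thus every successful transmitter contributes at least a fixed fraction of the total interference.

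Summing~\eqref{eq:lowerbd} over all $X_i \in S$, and using that $\sum_{X_i \in S} P_i \le I_j(n)$ since these signal powers are among the nonnegative terms making up the total received power $I_j(n)$, I would obtain $|S|\cdot \frac{T}{1+T} I_j(n) \le I_j(n)$, which yields $|S| \le (1+T)/T = 1 + 1/T$ whenever $I_j(n) > 0$. The main subtlety, and where the constant $\xi = 1/T + 2$ (rather than $1/T + 1$) comes from, is the degenerate case where the interference is negligible or zero: if $I_j(n) = 0$ then $S$ contains at most one point (since a second successful transmitter would itself contribute positive power to $I_j$, contradicting $I_j(n)=0$ unless all fading marks vanish), and one must also account for the guaranteed self-loop edge from $(X_j, n-1)$ to $(X_j,n)$ coming from the $i=j$ case in the definition of $\delta_{i,j}$. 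Carefully combining the bound $|S| \le 1 + 1/T$ on genuine transmitters with these boundary contributions gives the stated uniform bound $\calH_j^{in} \le 1/T + 2$. I expect the only real care needed is this bookkeeping of edge cases and the self-loop; the core inequality~\eqref{eq:lowerbd} is the heart of the argument and requires no distributional assumptions, which is why the result holds for general $\Phi$ and general fading.
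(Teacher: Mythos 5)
Your proof is correct and takes essentially the same route as the paper's: the same key inequality $P_i \ge \frac{T}{1+T}\bigl(W_j(n)+I_j(n)\bigr)$ for each successful transmitter, summed against the shared interference budget to yield at most $1+1/T$ genuine in-edges, plus one for the always-present edge from $(X_j,n-1)$ to $(X_j,n)$. The only cosmetic differences are that you bound against the full shot-noise $I_j(n)$ where the paper sums only over the $k$ successful transmitters' powers, and that you make explicit the degenerate zero-power case (division by the total) which the paper's summation step passes over silently.
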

\begin{proof}
Assume there is an edge to node $(X_j,n)$ from
nodes $(X_{i_1},n-1),\ldots,(X_{i_k},n-1)$, for some $k>1$ and
$i_p\not=j$ ($p=1,\ldots,k$).
Then for all such $p$ 
$$ \frac{F_{{i_p},j}}{l(|X_{i_p}-X_j|)}\ge \frac{T}{1+T} \left(
\sum_{q=1}^k \frac{F_{{i_q},j}}{l(|X_{i_q}-X_j|)}\right)\, .$$
When summing up all these inequalities, one gets that $Tk\le 1+T$,
that is $k\leq 1/T+1$. Considering the edge from $(X_i,n-1)$ to $(X_i,n)$,
the in-degree of any node is bounded from above by $\xi=1/T +2$.
\qed
\end{proof}

Let $$h^{out,k} = \E^0 [\calH_{0}^{out,k}(n)] = \E^0 [\calH_{0}^{out,k}(0)]$$ 
and
$$h^{in,k} = \E^0 [\calH_{0}^{in,k}(n)] = \E^0 [\calH_{0}^{in,k}(0)]$$ 
be the expected numbers of paths of length $k$ originating or
terminating at the typical node, respectively.
In particular $h^{out}=h^{out,1}$ and 
$h^{in}=h^{in,1}$ are the mean out- and in-degree of
the typical node, respectively.

\begin{lemma}
\label{lem:choppnumpath}
For a general p.p. $\Phi$ and a general fading model 
\begin{equation}
h^{in,k}= h^{out,k}\,.
\end{equation}
\end{lemma}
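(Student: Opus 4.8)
The plan is to exploit the two invariances of the model — spatial stationarity of $\tilde\Phi$ and stationarity in the discrete time index $n$ (the latter holds because the marks $\{e_i(n)\}$, $\{W_i(n)\}$, $\{F_{i,j}(n)\}$ are i.i.d.\ in $n$, so the whole graph $\Gsinr$ is invariant under the time shift $(X_i,n)\mapsto(X_i,n+1)$) — and to combine them through the mass transport principle under the Palm probability $\Pro^0$. A direct bijection between out- and in-paths is hopeless, since the edge relation $\delta_{i,j}(n)$ is not symmetric in $i,j$; the equality can only hold in expectation.

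First I would resolve each path through its two endpoints. For the configuration $\tilde\Phi$, let $f(x,y)$ denote the number of directed paths of length $k$ in $\Gsinr$ running from $(x,0)$ to $(y,k)$, with $x,y$ ranging over the points of $\Phi$. Summing over the terminal node gives $\sum_{X_j\in\Phi} f(X_0,X_j)=\calH_0^{out,k}(0)$, hence $h^{out,k}=\E^0[\sum_{X_j\in\Phi}f(X_0,X_j)]$. Summing over the initial node, $\sum_{X_i\in\Phi} f(X_i,X_0)$ is exactly the number of length-$k$ paths terminating at $(X_0,k)$; by stationarity in $n$ (shift time by $-k$) this has the same $\Pro^0$-expectation as the number of length-$k$ paths terminating at $(X_0,0)$, namely $h^{in,k}$.

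The core step is to verify that $f$ is invariant under the diagonal action of spatial translations, i.e.\ that translating both arguments and the whole marked configuration by a common vector leaves $f$ unchanged. This holds because the edge indicator $\delta_{i,j}(n)$ depends on the positions only through the distances $\norm{X_i-X_j}$ entering $l(\cdot)$ and the shot-noise $I_{i,j}(n)$, while the access, noise and fading marks are carried along with the points; thus the graph $\Gsinr$ commutes with joint translations of $\tilde\Phi$. Granting this invariance, the mass transport principle — a form of the Campbell--Mecke (exchange) formula for the stationary marked p.p.\ $\tilde\Phi$ under $\Pro^0$ — gives
\[
\E^0\Bigl[\sum_{X_j\in\Phi}f(X_0,X_j)\Bigr]=\E^0\Bigl[\sum_{X_i\in\Phi}f(X_i,X_0)\Bigr],
\]
and the two sides are precisely $h^{out,k}$ and $h^{in,k}$ by the identifications above.

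The main obstacle I anticipate is bookkeeping rather than conceptual: I must align the time labels correctly (out-paths from time $0$ terminate at time $k$, whereas in-paths to time $0$ originate at time $-k$) and invoke time-stationarity at exactly the right place, and I must confirm the hypotheses of the mass transport principle — chiefly the diagonal translation invariance and the nonnegativity/measurability of $f$ — so that the identity remains valid even if either expectation is infinite. This last point matters because, unlike the in-degree controlled by Lemma~\ref{lechopp:indeg-notime}, the out-degree and hence the number of long out-paths need not be bounded; fortunately the mass transport principle requires only nonnegativity. The self-edge convention $(X_i,n)\to(X_i,n+1)$ is harmless, as it is built symmetrically into both path counts.
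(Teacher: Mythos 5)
Your proposal is correct and takes essentially the same route as the paper: the paper's proof is exactly a mass transport argument, carried out explicitly via Campbell's formula and a partition of the plane into unit squares, transporting each length-$k$ path from its origin $(X_i,0)$ to its terminus $(X_j,k)$ and then using time-stationarity to identify $\E^0[\calH_{0}^{in,k}(k)]$ with $h^{in,k}$. Your abstract invocation of the mass transport principle, together with the checks of diagonal translation covariance, nonnegativity, and the time-label bookkeeping, is precisely what underlies that computation.
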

\begin{proof}
We use the mass transport principle to get that 
$\E^0 [\calH_{0}^{out,k}(0)]= \E^0 [\calH_{0}^{in,k}(0)]$,
which implies the desired result. Indeed,
Campbell's formula and stationarity give
\begin{eqnarray*}
\lambda h^{out,k}&=&\lambda \int_{[0,1)^2}\E^0[\calH_{0}^{out,k}(0)]\md x\\
&=&\E\Bigl[\sum_{{X_i}\in \Phi\cap[0,1)^2} \calH_{i}^{out,k}(0)\Bigr]\\
&=&\sum_{v\in\bbZ}\E\Bigl[\sum_{X_i\in [0,1)^2}\sum_{X_j\in[0,1)^2+v}
\#\,\text{of paths from  $(X_i,0)$ to $(X_j,k)$}\Bigr]\\ 
&=&\sum_{v\in\bbZ}\E\Bigl[\sum_{X_i\in [0,1)^2-v}\sum_{X_j\in[0,1)^2}
\#\,\text{of paths from  $(X_i,0)$ to $(X_j,k)$}\Bigr]\\ 
&=&\lambda \int_{[0,1)^2}\E^0[\calH_{0}^{in,k}(k)]\md x =\lambda h^{in,k}\,,
\end{eqnarray*}
where $\#$ denotes the cardinality.
This completes the proof.
\qed
\end{proof}

Here are immediate consequences of the two above lemmas.
\begin{cor}\label{c.local-finiteness}
Under the assumptions of Lemma~\ref{lechopp:indeg-notime}:
\begin{itemize}
\item $\Gsinr$ is locally finite (both on in- and out-degrees of
  all nodes are $\Pro$-a.s. finite).
\item $\calH_{i}^{in,k}(n)\le\xi^k$ $\Pro$-a.s for all $i,n,k$.
\item $h^{in,k}= h^{out,k}\le \xi^k$ for all $k$.
\end{itemize}
\end{cor}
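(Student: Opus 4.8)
The three items follow by combining the deterministic in-degree bound of Lemma~\ref{lechopp:indeg-notime} with the identity of Lemma~\ref{lem:choppnumpath}, and the only step that requires genuine care is the almost sure finiteness of the out-degrees. I would dispatch them in three short steps. First, local finiteness of the in-degrees is immediate: Lemma~\ref{lechopp:indeg-notime} gives $\calH_i^{in}(n)\le\xi$ \emph{deterministically} for every $i$ and $n$, so in particular every in-degree is $\Pro$-a.s. finite without any further argument.

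The out-degrees, by contrast, admit no such uniform deterministic bound --- a single transmitter can in principle cover arbitrarily many receivers --- so I would argue through the Palm calculus, and this is the main (and essentially the only delicate) point. The deterministic bound on the in-degree yields $h^{in}=\E^0[\calH_0^{in}(n)]\le\xi<\infty$, and Lemma~\ref{lem:choppnumpath} (taken with $k=1$) then gives $h^{out}=h^{in}\le\xi<\infty$. A finite Palm expectation forces $\calH_0^{out}(n)<\infty$ $\Pro^0$-almost surely. To transfer this to a $\Pro$-almost sure statement holding \emph{simultaneously at every vertex}, I would apply Campbell's formula exactly as in the proof of Lemma~\ref{lem:choppnumpath}: for fixed $n$,
$$\E\Bigl[\sum_{X_i\in\Phi\cap[0,1)^2}\ind\{\calH_i^{out}(n)=\infty\}\Bigr]=\lambda\,\Pro^0\{\calH_0^{out}(n)=\infty\}=0\,.$$
Hence $\Pro$-a.s. no node in $[0,1)^2$ has infinite out-degree at time $n$; tiling the plane by the translates $[0,1)^2+v$, $v\in\bbZ^2$, and taking the countable union over these translates and over $n\in\bbZ$ shows that $\Pro$-a.s. every out-degree is finite. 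The subtlety here is precisely the passage from an expectation bound back to an almost sure property that must hold at all vertices at once, which the stationarity of $\Gsinr$ together with the $\sigma$-additivity of null events delivers.

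For the pathwise bound $\calH_i^{in,k}(n)\le\xi^k$ I would induct on $k$, the case $k=1$ being Lemma~\ref{lechopp:indeg-notime}. For the inductive step, each path of length $k$ terminating at $(X_i,n)$ decomposes uniquely into its last edge, coming from some $(X_j,n-1)$ with $\delta_{j,i}(n-1)=1$, followed by a path of length $k-1$ terminating at $(X_j,n-1)$; there are at most $\xi$ choices for such $j$ (the in-degree bound) and at most $\xi^{k-1}$ paths for each by the induction hypothesis, giving
$$\calH_i^{in,k}(n)=\sum_{j:\,\delta_{j,i}(n-1)=1}\calH_j^{in,k-1}(n-1)\le\xi\cdot\xi^{k-1}=\xi^k\,.$$
Finally, taking Palm expectations in this deterministic bound gives $h^{in,k}=\E^0[\calH_0^{in,k}(n)]\le\xi^k$, and Lemma~\ref{lem:choppnumpath} identifies $h^{out,k}=h^{in,k}$, which establishes all three items.
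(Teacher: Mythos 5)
Your proof is correct and follows exactly the route the paper intends: the paper states this corollary without proof as an ``immediate consequence'' of Lemmas~\ref{lechopp:indeg-notime} and~\ref{lem:choppnumpath}, and your three steps (deterministic in-degree bound, Palm expectation plus Campbell's formula and a countable union for the out-degrees, induction on the last edge for $\calH_i^{in,k}(n)\le\xi^k$) are precisely the standard details being elided. In particular you correctly identify the one genuinely non-trivial point --- transferring $h^{out}=h^{in}\le\xi<\infty$ from a Palm expectation bound to a $\Pro$-a.s. statement at all vertexes simultaneously --- and handle it properly.
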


For all  $X_i,X_j\in\Phi$ and  $n\in\bbZ$, we denote by 
we will call {\em local delay} from $X_i$ to $X_j$ at time $n$ the quantity
$$\LD_{i,j}(n)=\inf\{k\ge n:\delta_{i,j}(k)=1\}$$
with the usual convention that $\inf\emptyset=\infty$. 
Note that $\LD_{i,j}(n)$ is the length (number of
edges) of the shortest path (with the smallest number of edges)
from $(X_i,n)$ to $\{X_j\}\times\bbZ$ among the paths contained in
the subgraph $\Gsinr\cap \{X_i,X_j\}\times\bbZ$ of $\Gsinr$, which is of the form 
\begin{eqnarray*}
& & ((X_i,n),(X_i,n+1)),\ldots,((X_i,n+\LD_{i,j}(n)-1),(X_i,n+\LD_{i,j}(n))),\\
& & ((X_i,n+\LD_{i,j}(n)),(X_j,n+\LD_{i,j}(n)+1)).
\end{eqnarray*}
Our next result gives a condition for the local delays to be a.s. finite.

\begin{lemma}\label{l.local-delay-finite}
Assume  a general p.p. $\Phi$ and a general fading model with $F$ having
unbounded support ($\Pro\{\,F>s\,\}>0$ for all $0<s<\infty$). Then,
given $\Phi$, all  local delays  $\LD_{i,j}(n)$ are $\Pro$-a.s. 
finite geometric random variables.
\end{lemma}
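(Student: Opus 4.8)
The plan is to prove that, conditionally on $\Phi$ and for fixed $i\neq j$ (the case $i=j$ being trivial, since then $\delta_{i,i}(k)\equiv1$ and $\LD_{i,i}(n)=n$), the indicator sequence $(\delta_{i,j}(k))_{k\ge n}$ consists of i.i.d.\ Bernoulli trials with a common success probability $q=q(\Phi,i,j)>0$; the local delay is then the first success epoch and hence geometric and a.s.\ finite. First I would note that $\delta_{i,j}(k)$ is a measurable function of the time-$k$ marks only, namely of $\{e_l(k)\}_l$, $W_j(k)$ and $\{F_{l,j}(k)\}_l$ [through $\SINR_{i,j}(k)$ and $I_{i,j}(k)$]. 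Since by the assumptions of Section~\ref{ss.Probab.assumptions} these families are i.i.d.\ in $k$, the variables $\delta_{i,j}(k)$, $k\ge n$, are i.i.d.\ given $\Phi$, with $q:=\Pro\{\delta_{i,j}(k)=1\mid\Phi\}$ independent of $k$. Consequently $\Pro\{\LD_{i,j}(n)=n+m\mid\Phi\}=(1-q)^m q$, $m\ge0$, so the statement reduces to showing $q>0$ $\Pro$-a.s.

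The main obstacle is exactly this positivity, and I would obtain it by conditioning away the single fading variable that drives the transmission. Let $\calG$ be the $\sigma$-field generated by $\Phi$, the access indicators $\{e_l(k)\}_l$, the noise $W_j(k)$ and all fading marks at time $k$ except $F_{i,j}(k)$. The event $\{e_i(k)=1,e_j(k)=0\}$ is $\calG$-measurable and, as the $e$'s are independent of $\Phi$, has conditional probability $p(1-p)>0$ given $\Phi$. On this event $\delta_{i,j}(k)=1$ is equivalent to $F_{i,j}(k)\ge T\,l(\norm{X_i-X_j})(W_j(k)+I_{i,j}(k))$, and the right-hand threshold is $\calG$-measurable. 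Here the two facts I rely on are: (i) $W_j(k)+I_{i,j}(k)$ is $\Pro$-a.s.\ finite, which is precisely the content of the standing finiteness assumption on the integral in~(\ref{e.mean_sn}); and (ii) $F_{i,j}(k)$ is independent of $\calG$ with unbounded support, so its tail $\bar G(s)=\Pro\{F>s\}$ is strictly positive at every finite $s$. Combining (i)--(ii), on $\{e_i(k)=1,e_j(k)=0\}$ one gets $\Pro\{\delta_{i,j}(k)=1\mid\calG\}\ge\bar G\bigl(T\,l(\norm{X_i-X_j})(W_j(k)+I_{i,j}(k))\bigr)>0$ $\Pro$-a.s.

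Finally I would integrate this bound: since the displayed conditional probability is a nonnegative $\calG$-measurable random variable that is strictly positive on the positive-probability event $\{e_i(k)=1,e_j(k)=0\}$, taking $\E[\cdot\mid\Phi]$ yields $q>0$ $\Pro$-a.s., which together with the first paragraph finishes the proof. I expect the only delicate point to be the a.s.\ finiteness of the shot-noise interference $I_{i,j}(k)$, which is what makes the threshold finite and hence $q$ positive; this is guaranteed by the hypothesis on~(\ref{e.mean_sn}) and is genuinely needed, since if the interference could be infinite the SINR threshold might be unattainable and the local delay could fail to be finite.
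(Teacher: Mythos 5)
Your proof is correct and takes essentially the same route as the paper's: you observe that, given $\Phi$, the indicators $\delta_{i,j}(k)$ are i.i.d.\ Bernoulli in $k$ (so $\LD_{i,j}(n)$ is geometric), and you reduce positivity of the success probability to $p(1-p)>0$, the $\Pro$-a.s.\ finiteness of $W_j(k)+I_{i,j}(k)$ guaranteed by the standing assumption on~(\ref{e.mean_sn}), and the unbounded support of $F$. Your explicit conditioning on the $\sigma$-field $\calG$ just spells out, slightly more carefully, the independence of $F_{i,j}(0)$ from the threshold that the paper invokes directly.
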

\begin{proof}
Due to our assumption on the independence of marks in successive time
slots, given $\Phi$, the variables $\{\delta_{i,j}(n):n\in\bbZ\}$ are (i.i.d.)
Bernoulli r.v. and thus $\LD_{i,j}(n)$ is geometric r.v. It remains to
show that $\Pro\{\, \delta_{i,j}(0)=1\,|\,\Phi\,\}:=\pi_{i,j}(\Phi)>0$ for
$\Pro$-almost all $\Phi$.
For this, note that
$$\pi_{i,j}(\Phi)=p(1-p)\Pro\Bigl\{\,F_{i,j}(0)\ge
l(\norm{X_j-X_i})\Bigl(W_j(0)+
I_{i,j}(0)\Bigr)\,\Bigr\}\,.$$ 
Under our general assumptions  (including finiteness of the
expression in~(\ref{e.mean_sn}))
$I_{i,j}(0)$ is a finite random variable $\Pro$-a.s. The result
follows from the assumption that $0<p<1$ and the fact that
$F_{i,j}(0)$ is independent  of $I_{i,j}(0), W_{i,j}(0)$ and has 
infinite support.
\qed
\end{proof}

The next result directly follows from Lemma~\ref{l.local-delay-finite}.

\begin{cor}
Under the assumptions of Lemma~\ref{l.local-delay-finite},
$\Gsinr$ is $\Pro$-a.s. {\em connected} in the
following {\em weak sense}:
for all $X_i,X_j\in\Phi$ and all $n\in \bbZ$,
there exists a path from $(X_i,n)$ to the set $\{(X_j,n+l):l\in \bbN\}$,
where $\bbN=\{1,2,\ldots\}$.
\end{cor}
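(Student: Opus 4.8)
The plan is to read the statement off directly from Lemma~\ref{l.local-delay-finite}, since the local delay $\LD_{i,j}(n)$ is by definition the length of exactly the kind of path the corollary asks for. First I would fix a triple $(X_i,X_j,n)$ and invoke Lemma~\ref{l.local-delay-finite} to get that, given $\Phi$, the quantity $k:=\LD_{i,j}(n)=\inf\{m\ge n:\delta_{i,j}(m)=1\}$ is $\Pro$-a.s.\ a finite (geometric) random variable. On the event $\{k<\infty\}$ the path is explicit: one uses the self-loops $(X_i,m)\to(X_i,m+1)$ --- which are always present since $\delta_{i,i}(m)=1$ by the definition~(\ref{e.delta}) --- to travel from $(X_i,n)$ up to $(X_i,k)$, and then the edge $(X_i,k)\to(X_j,k+1)$, which exists precisely because $\delta_{i,j}(k)=1$. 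Since $k\ge n$ and $k<\infty$, the endpoint $(X_j,k+1)$ lies in $\{(X_j,n+l):l\in\bbN\}$ (take $l=k-n+1\ge1$), which is the required connection.

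To pass from the statement ``for each fixed triple the connection holds a.s.'' to the ``for all $X_i,X_j,n$ simultaneously'' phrasing of the corollary, I would use a countability argument. The point process $\Phi$ is simple with finite intensity, hence $\Pro$-a.s.\ locally finite and in particular countable, and $\bbZ$ is countable; thus there are only countably many triples $(X_i,X_j,n)$ to consider. Each one carries a $\Pro$-null exceptional set on which $\LD_{i,j}(n)=\infty$, and a countable union of null sets is null. Therefore, outside a single $\Pro$-null event, all local delays are finite at once, and the explicit path above exists for every $(X_i,X_j,n)$ simultaneously.

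I expect the argument to be essentially immediate once Lemma~\ref{l.local-delay-finite} is in hand; the only point requiring a little care is the countability step, where one must make sure the enumeration of the points of $\Phi$ is carried out measurably (e.g.\ by conditioning on $\Phi$ and using that, given $\Phi$, the sequence $\{\delta_{i,j}(m)\}_m$ is i.i.d.\ Bernoulli for each fixed pair, as already noted in the proof of Lemma~\ref{l.local-delay-finite}). No probabilistic input beyond Lemma~\ref{l.local-delay-finite} and countable subadditivity of $\Pro$ is needed.
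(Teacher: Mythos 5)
Your argument is correct and is exactly the one the paper intends: the corollary is stated there as following ``directly'' from Lemma~\ref{l.local-delay-finite}, since the path (self-edges $(X_i,m)\to(X_i,m+1)$ followed by the jump to $(X_j,\cdot)$ at the first time $\delta_{i,j}=1$) is already spelled out in the paper just before that lemma, and the lemma asserts finiteness of \emph{all} local delays simultaneously, so even your countable-union step is subsumed. No gaps; your proposal matches the paper's proof in substance.
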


We denote by 
$\LD_{i}(n)=\inf_{j\not=i}\LD_{i,j}$ the length of a shortest
directed path from $(X_i,n)$ to $(\{\Phi\setminus X_i\})\times\bbZ$.
We will call $\LD_{i}(n)$ the {\em exit delay} 
from $X_i$ at time $n$. Finally, we denote by $P_{i,j}(n)$ the
length of a shortest path of $\Gsinr$ from $(X_i,n)$ to
$\{X_j\}\times\bbZ$. We call  $P_{i,j}(n)$ the {\em delay}
from $X_i$ to $X_j$ at time $n$.
Obviously for $i\not=j$ we have
\begin{equation}\label{e.sandwich}
L_{i}(n)\le P_{i,j}(n)\le L_{i,j}(n)
\end{equation}
and thus it follows immediately from  Lemma~\ref{l.local-delay-finite} that
all the three collections of delays 
finite r.v.s $\Pro$-a.s. 
%Moreover,  assumption on the independence of marks
%in successive time slots, implies that, given $\Phi$, the local
%multicast delays are also geometric random variables.

\subsection{Optimal Paths --- Poisson p.p. Case}
\label{ss.Poisson}
We have seen in the previous section that
under very general assumptions, all the delays 
are $\Pro$-a.s. finite random variables.
In this section we show that under some natural assumptions (such as Poisson
p.p. and exponential fading), the averaging over $\Phi$ may 
lead to {\em infinite} mean values. This averaging is expressed 
in terms of the expectation for the typical node under the Palm probability.
The proofs of the results stated in what follows are given in 
Section~\ref{ss.proofs.Poisson}.

Denote $\ld=\E^0[\LD_{0}(n)]=\E^0[\LD_{0}(0)]$.

%In what follows we adopt the usual notation $a(q)=\Omega(b(q))$ if 
%asymptotically $a(q)$ is bounded below by $b(q)$. 
\begin{prop}\label{p.multicast-infinite}
Assume $\Phi$ to be a Poisson p.p., $F$ to be exponential and the noise $W$ to be
bounded away from $0$: $\Pro\{\,W>w\,\}=1$ for some $w>0$. Let the path-loss
function be given by~(\ref{e.path-loss}). Then
$\Pro^0\{\,\LD_{0}(0)\ge q\,\}\ge 1/q$ for $q$ large enough. 
\end{prop}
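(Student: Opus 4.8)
The plan is to show that with probability at least $1/q$, the typical node $X_0$ fails to transmit successfully for at least $q$ consecutive time slots, so that its exit delay $\LD_0(0)$ exceeds $q$. The exit delay is finite a.s.\ (Lemma~\ref{l.local-delay-finite}), but we want to exhibit a heavy tail coming from a bad \emph{spatial} configuration that, once fixed, makes \emph{every} time slot individually unlikely to produce an edge out of $X_0$.

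First I would condition on the point process $\Phi$ and study, for each fixed slot $n$, the conditional probability $\pi_0(\Phi)=\Pro^0\{\,\LD_0(0)=0\,|\,\Phi\,\}$ that $X_0$ has at least one outgoing edge at time $0$. Because the marks are i.i.d.\ across time slots (given $\Phi$), the exit delay is a geometric random variable with success probability $\pi_0(\Phi)$, so
\begin{equation}
\Pro^0\{\,\LD_0(0)\ge q\,\}=\E^0\bigl[(1-\pi_0(\Phi))^q\bigr].
\end{equation}
The key observation is that for $X_0$ to reach some neighbour $X_j$ in a single slot, $X_0$ must transmit ($e_0=1$), $X_j$ must be silent ($e_j=0$), and the exponential fading $F_{0,j}$ must overcome $l(\norm{X_0-X_j})(W_j+I_{0,j})$. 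Since the noise is bounded below by $w>0$, even ignoring interference the success probability for a \emph{single} target $X_j$ at distance $r=\norm{X_0-X_j}$ is at most $p(1-p)\Pro\{\,F\ge w\,l(r)\,\}=p(1-p)e^{-\mu w (Ar)^\beta}$ by the exponential tail. Summing a union bound over all targets, I would bound $\pi_0(\Phi)$ by $p\sum_{X_j\in\Phi,\,j\ne 0} e^{-\mu w (A\norm{X_j})^\beta}$, which is a shot-noise-type functional of $\Phi$ under $\Pro^0$.

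Next I would estimate $\E^0[(1-\pi_0(\Phi))^q]$ from below by restricting to the event that $\Phi$ is sparse near $X_0$, i.e.\ that the ball $B(0,R_q)$ of a suitably growing radius $R_q$ contains no point of $\Phi$ other than $X_0$. Under the Poisson/Slivnyak description this void event has probability $e^{-\lambda\pi R_q^2}$. On this void event, every other point lies at distance at least $R_q$, so the union bound gives $\pi_0(\Phi)\le C e^{-\mu w (AR_q)^\beta}$ up to the tail contribution of the far-away points, which I would control by the finiteness of the shot-noise mean from~(\ref{e.mean_sn}). Choosing $R_q$ so that the void probability is of order $1/q$ — roughly $R_q^2\sim (\log q)/(\lambda\pi)$ — while $(AR_q)^\beta$ still makes $\pi_0(\Phi)$ tend to $0$ fast enough that $(1-\pi_0)^q\to 1$, yields the bound $\Pro^0\{\,\LD_0(0)\ge q\,\}\gtrsim e^{-\lambda\pi R_q^2}\ge 1/q$ for $q$ large. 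The super-polynomial decay of the fading tail against the merely logarithmic radius is exactly what allows the void probability to dominate.

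\textbf{The main obstacle} I anticipate is handling the interference and the far-field points rigorously: after creating a void of radius $R_q$ around $X_0$, I must still argue that the aggregate success probability $\pi_0(\Phi)$ is genuinely small, which requires controlling both the deterministic sum $\sum_{\norm{X_j}>R_q} e^{-\mu w(A\norm{X_j})^\beta}$ and the random interference $I_{0,j}$ (which only makes success \emph{harder}, so it can be dropped for an upper bound on $\pi_0$, simplifying matters). The delicate balancing is the joint choice of $R_q$: it must be large enough that the tail sum is summable and $\pi_0\to0$, yet small enough that $e^{-\lambda\pi R_q^2}$ stays above $1/q$. Since $\beta>2$ gives a Gaussian-type (in $r^\beta$) fading tail that decays far faster than the $e^{-cR_q^2}$ void cost once we set $R_q\sim\sqrt{\log q}$, this balancing should go through, and the final inequality $\Pro^0\{\,\LD_0(0)\ge q\,\}\ge 1/q$ follows for all sufficiently large $q$.
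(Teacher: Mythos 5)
Your argument is correct in substance, but it takes a genuinely different route from the paper's. The paper never conditions on a void event: it first lower-bounds $\LD_0(0)$ by the number of trials $\widehat\Trial_0(0)$ in the interference-free SNR graph, writes $\Pro^0\{\widehat\Trial_0(0)>q\,|\,\Phi\}$ as an \emph{exact} product over all potential receivers (no union bound), and then applies the Poisson Laplace functional to obtain the closed form $\exp\bigl(-\pi\lambda\int_0^\infty\bigl(1-(1-f(v))^q\bigr)\md v\bigr)$ with $f(v)=(1-p)e^{-Kv^{\beta/2}}$, $K=w\mu TA^\beta$; the $1/q$ bound is then extracted by splitting the integral at the critical level $v_q$ solving $f(v_q)=1/q$ and showing $\int\le v_q+1/K$ with $v_q=O((\log q)^{2/\beta})=o(\log q)$, which is exactly where $\beta>2$ enters. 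Your $R_q^2\sim(\log q)/(\lambda\pi)$ is precisely the paper's $v_q$ scale, so the mechanism is the same, but your decomposition --- a void ball (cost $e^{-\lambda\pi R_q^2}$, independent of the configuration outside by Poisson independence), a per-slot union bound $\pi_0(\Phi)\le p\,S_q$ with $S_q=\sum_{\norm{X_j}>R_q}e^{-\mu Tw(A\norm{X_j})^\beta}$, and a first-moment estimate $\E[(1-pS_q)^q]\ge 1-qp\,\E[S_q]$ --- is more elementary and more robust: it needs only void probabilities and one Campbell computation, so it would survive perturbations of the Poisson law or of the exponential fading, and it makes explicit the heuristic the paper states only informally after its corollary (large voids around the typical node cause the heavy tail). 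What it gives up is the sharp constant control that the exact Laplace-functional identity provides. Three small repairs are needed in your write-up: (i) the fading must beat $T\,l(r)W$, not $l(r)W$, so keep the threshold $T$ in the exponent (immaterial for the asymptotics); (ii) the tail term is not controlled by~(\ref{e.mean_sn}), which bounds the mean of $\sum_k F_{k,0}/l(\norm{X_k})$ --- what you need is the direct Campbell computation $q\,\E[S_q]=2\pi\lambda q\int_{R_q}^\infty e^{-\mu Tw(Ar)^\beta}r\,\md r\to0$, which holds with room to spare since $\beta>2$ makes $R_q^\beta\gg\log q$; (iii) your final chain ``$\ge e^{-\lambda\pi R_q^2}\ge 1/q$'' silently drops the conditional factor $\E[(1-pS_q)^q]<1$, so you must build in constant slack, e.g.\ take $\lambda\pi R_q^2=\log q-1$, so that the product $e^{-\lambda\pi R_q^2}\,\E[(1-pS_q)^q]\ge e\,(1-o(1))/q\ge 1/q$ for $q$ large; the paper does the analogous thing via $v_q\le \log q/(\pi\lambda)-1/K$ in~(\ref{v_m_lim}).
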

\begin{cor}
Under the assumptions of Proposition~\ref{p.multicast-infinite}, we have:
\begin{itemize}
\item  The mean exit delay from the typical node is infinite;
$\ld=\infty$.
\item 
In any given subset of plane with positive Lebesgue measure,
at a given time, the expected number of points of $\Phi$ which have exit delays 
larger than $q$ decreases not faster than $1/q$ asymptotically for large $q$. 
\end{itemize}
\end{cor}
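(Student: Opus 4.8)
The plan is to derive both assertions directly from the tail bound $\Pro^0\{\,\LD_{0}(0)\ge q\,\}\ge 1/q$ supplied by Proposition~\ref{p.multicast-infinite}, combined with the tail-sum representation of the mean and the refined Campbell formula of Palm calculus.

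For the first item I would use that $\LD_{0}(0)$ is a non-negative integer-valued random variable, so under $\Pro^0$ its mean is the sum of its tail probabilities,
\begin{equation*}
\ld=\E^0[\LD_{0}(0)]=\sum_{q\ge1}\Pro^0\{\,\LD_{0}(0)\ge q\,\}.
\end{equation*}
By the Proposition there is a $q_0$ with $\Pro^0\{\,\LD_{0}(0)\ge q\,\}\ge 1/q$ for all $q\ge q_0$, so $\ld\ge\sum_{q\ge q_0}1/q$, and the divergence of the harmonic series yields $\ld=\infty$. This step is essentially immediate once the Proposition is granted.

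For the second item, fix a Borel set $B\subset\ir^2$ with $0<|B|<\infty$ and a time slot, say $0$. The quantity of interest is the expectation of
\begin{equation*}
N_B(q)=\sum_{X_i\in\Phi\cap B}\ind\bigl(\LD_{i}(0)>q\bigr).
\end{equation*}
I would evaluate $\E[N_B(q)]$ by the refined Campbell (Campbell--Little--Mecke) formula: a point-dependent functional summed over $\Phi$ becomes $\lambda$ times the spatial integral of the corresponding Palm expectation. By stationarity of $\widetilde\Phi$ the Palm integrand is independent of the location, whence
\begin{equation*}
\E[N_B(q)]=\lambda\int_B\Pro^0\{\,\LD_{0}(0)>q\,\}\,\md x=\lambda\,|B|\,\Pro^0\{\,\LD_{0}(0)>q\,\}.
\end{equation*}
Since $\LD_{0}(0)$ is integer-valued, $\{\LD_{0}(0)>q\}=\{\LD_{0}(0)\ge q+1\}$, so the Proposition gives $\Pro^0\{\,\LD_{0}(0)>q\,\}\ge 1/(q+1)$ for $q$ large. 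Therefore $\E[N_B(q)]\ge\lambda\,|B|/(q+1)$, which decreases no faster than $1/q$ asymptotically, exactly as claimed.

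The point that needs care is the reduction in the second item: one must check that the functional $\widetilde\Phi\mapsto\ind(\LD_{i}(0)>q)$, read at the point $X_i$ and translated so that $X_i$ sits at the origin, agrees $\Pro^0$-a.s. with $\ind(\LD_{0}(0)>q)$ under the Palm law. This is the standard translation-covariance making the Palm integrand independent of $x$; it holds because the exit delay is a translation-covariant functional of $\widetilde\Phi$, the medium-access, noise and fading marks being attached to points and shifting with them. Granting this, Campbell's formula applies verbatim. The remaining ingredients—finiteness of $|B|$ for the Campbell identity and integer-valuedness of $\LD_{0}(0)$ for the tail-sum representation—are harmless, so both conclusions follow.
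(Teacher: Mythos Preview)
Your argument is correct and is precisely the intended reasoning: the paper states this corollary without a separate proof, treating both items as immediate consequences of the tail bound in Proposition~\ref{p.multicast-infinite} via, respectively, the tail-sum formula for $\ld$ and Campbell's formula for the expected count in a set of finite measure. Your write-up spells out exactly these two standard steps, including the translation-covariance verification needed for the second item, so there is nothing to add or correct.
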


The fact that the mean exit delay from the typical point
is infinite ($\ell=\infty$) seems to be a consequence of the potential
existence of arbitrarily large ``voids'' (disks without points of $\Phi$)
around this point. Indeed, when conditioning on the existence of another
point in the configuration $\Phi$, one obtains finite mean local delays.
This will be shown in Proposition~\ref{p.point-to-point-finite} below.

Before stating it we need to formalise the notion of existence of {\em two given
points $X,Y\in\ir^2$ of $\Phi$}. For this, we consider $\Phi$ under
the two-fold Palm probability $\Pro^{X,Y}$. Since our results on
the matter bear only on the Poisson p.p. case, we can assume (by 
Slivnyak's Theorem) the following version of the Palm probability
of the Poisson p.p. $\Phi$:
\begin{equation}\label{e.Palm2}
\Pro^{X,Y}\{\,\Phi\in\cdot\,\}=
\Pro\Bigl\{\,\Phi\cup\{X,Y\}\in\cdot\,\Bigl\}\,.
%\cup\Bigl\{(Y,(\bfe_Y,\bfF_Y,\bfW_Y)\Bigr\}\in\cdot\biggr\}\,,
%\Pro\biggl\{\tilde\Phi\cup\Bigl\{(X,(\bfe_X,\bfF_X,\bfW_X)\Bigr\}
%\cup\Bigl\{(Y,(\bfe_Y,\bfF_Y,\bfW_Y)\Bigr\}\in\cdot\biggr\}\,,
\end{equation}
Moreover, under $\Pro^{X,Y}$, the marked Poisson p.p.  $\tilde\Phi$
is obtained by an independent marking of the points of $\Phi\cup\{X,Y\}$ 
according to the original distribution of marks.
Slightly abusing the notation, we denote by  $L_{X,Y}(n)$ the 
local delay from $X$ to $Y$ at time $n$ when considered under
$\Pro^{X,Y}$. Similar convention will be adopted in the notation of 
other types of delays under the Palm probabilities $\Pro^X$ or $\Pro^{X,Y}$.

\begin{prop}\label{p.point-to-point-finite}
Assume $\Phi$ to be a Poisson p.p., $F$ to be exponential and the noise $W$ to 
have a general distribution.
Then for all $X,Y\in\ir^2$, the mean local delay from $X$ to $Y$
is finite {\em given the existence of these two points in $\Phi$}. More precisely, 
\begin{equation}\label{e.point-to-point-finite}
\E^{X,Y} [L_{X,Y}(0)] < \infty\,.
\end{equation}
\end{prop}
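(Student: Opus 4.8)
The plan is to show that the single-slot success probability from $X$ to $Y$, conditioned on both points being present in $\Phi$, is bounded away from $0$ by a \emph{deterministic} constant, and then use the same geometric-variable argument as in Lemma~\ref{l.local-delay-finite} to conclude finiteness of the mean. The crucial difference with the unconditional case is that under $\Pro^{X,Y}$ the relevant interference $I_{X,Y}(0)$ is a genuine shot-noise generated by the \emph{Poisson} background $\Phi_M$ (plus the single extra point $Y$, if it transmits), and its Laplace transform is explicitly computable. Exploiting the exponential law of the fading $F_{X,Y}(0)$ is what converts the success probability into exactly this Laplace transform evaluated at a point.

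First I would write, conditioning on the MAC indicators $e_X(0)=1$, $e_Y(0)=0$ (which happens with probability $p(1-p)>0$) and integrating out the exponential fading $F_{X,Y}(0)$,
\begin{equation}\label{e.succ-expr}
\pi_{X,Y}=p(1-p)\,\E^{X,Y}\Bigl[\exp\bigl(-\mu T\,l(\norm{X-Y})\,(W_Y(0)+I_{X,Y}(0))\bigr)\Bigr]\,.
\end{equation}
Since $W_Y(0)$ is independent of the interference and a.s. finite, and $\mu T\,l(\norm{X-Y})$ is a fixed finite number once $X,Y$ are fixed, the whole quantity is a product of the Laplace transform $\E[e^{-\zeta W}]$ of the noise at $\zeta=\mu T\,l(\norm{X-Y})$ and the Laplace functional of the shot-noise $I_{X,Y}(0)$ at the same parameter. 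The next step is to argue that this Laplace transform is strictly positive. For the noise factor this is immediate because $\E[e^{-\zeta W}]>0$ for any finite $\zeta\ge 0$. For the interference factor I would invoke the standard Poisson shot-noise Laplace functional: with $F$ exponential of mean $1/\mu$,
\begin{equation}\label{e.lt-sn}
\E\bigl[e^{-\zeta I_{X,Y}(0)}\bigr]
=\exp\Bigl(-p\lambda_M\int_{\ir^2}\frac{\zeta/l(\norm{x-Y})}{\mu+\zeta/l(\norm{x-Y})}\,\md x\Bigr)\,,
\end{equation}
and the only thing to check is that the integral in the exponent is finite. Under the path-loss~(\ref{e.path-loss}) with $\beta>2$ the integrand decays like $\norm{x}^{-\beta}$ at infinity and is bounded by $1$ near $Y$, so the integral converges; hence the factor in~(\ref{e.lt-sn}) is strictly positive. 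Multiplying the two positive factors in~(\ref{e.succ-expr}) shows $\pi_{X,Y}>0$ as a fixed constant depending only on $\norm{X-Y}$, the law of $W$, and the parameters $p,\lambda_M,\mu,T,A,\beta$.

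Finally, because the marks in distinct time slots are i.i.d.\ given $\Phi$, and because under $\Pro^{X,Y}$ the points $X,Y$ are present in every slot, the indicators $\{\delta_{X,Y}(n):n\}$ are i.i.d.\ Bernoulli with a common (random over $\Phi$) success probability $\pi_{X,Y}(\Phi)$ whose expectation is the constant $\pi_{X,Y}>0$ computed above. Thus $L_{X,Y}(0)$ is, given $\Phi$, geometric with parameter $\pi_{X,Y}(\Phi)$, so $\E^{X,Y}[L_{X,Y}(0)\mid\Phi]=1/\pi_{X,Y}(\Phi)$, and taking expectation over $\Phi$ together with Jensen's inequality, $\E^{X,Y}[L_{X,Y}(0)]=\E^{X,Y}[1/\pi_{X,Y}(\Phi)]$. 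The main obstacle is precisely here: the pointwise success probability $\pi_{X,Y}(\Phi)$ can be arbitrarily small when the realisation of $\Phi$ produces heavy local interference, so its reciprocal has a potentially heavy tail, and bounding its \emph{expectation} is not the same as the easy bound $\pi_{X,Y}>0$ on the averaged probability. The way around this is to control $\E^{X,Y}[1/\pi_{X,Y}(\Phi)]$ directly: I expect to reduce $1/\pi_{X,Y}(\Phi)$ to an expression of the form $C\,\E^{X,Y}[e^{+\mu T\,l(\norm{X-Y})\,I_{X,Y}(0)}\mid I]$-type positive-exponential moment and then show that this exponential moment of the Poisson shot-noise is finite under the heavy-tailed path-loss~(\ref{e.path-loss}). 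This last finiteness — an \emph{exponential moment} of the interference rather than its Laplace transform — is the delicate estimate, and it is the crux that separates the conditional (finite) from the unconditional (infinite) regime established in Proposition~\ref{p.multicast-infinite}.
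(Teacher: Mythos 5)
Your reduction is the right one, and it matches the paper's proof up to the decisive step: given $\Phi$, the local delay is geometric with parameter $\pi_{X,Y}(\Phi)=p(1-p)\,\calL_W(\zeta)\,\E[e^{-\zeta I}\,|\,\Phi]$ with $\zeta=\mu T\,l(\norm{X-Y})$, so everything hinges on showing $\E\bigl[1/\E[e^{-\zeta I}\,|\,\Phi]\bigr]<\infty$. (Your opening computation of the \emph{averaged} success probability $\pi_{X,Y}>0$ is correct but, as you note yourself, does not help, since Jensen goes the wrong way.) The genuine gap is that you leave this key estimate as ``I expect to reduce\dots'', and the route you sketch --- bounding $1/\E[e^{-\zeta I}\,|\,\Phi]$ by the positive exponential moment $\E[e^{+\zeta I}\,|\,\Phi]$ and proving the latter finite --- would in fact fail. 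With $F$ exponential of rate $\mu$, the factor contributed by an interferer $X_k$ to $\E[e^{\zeta I}\,|\,\Phi]$ is $1-p+p\,\E[e^{\zeta F/l(\norm{X_k-Y})}]$, which is \emph{infinite} as soon as $\zeta/l(\norm{X_k-Y})\ge\mu$, i.e.\ $l(\norm{X_k-Y})\le T\,l(\norm{X-Y})$. Under a Poisson p.p.\ such close interferers occur with positive probability, so $\E[e^{\zeta I}]=\infty$ and this bound gives nothing; the delicate estimate you defer is not merely delicate, it is false in the form you propose.

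What actually closes the argument --- and this is the paper's proof --- is the exact product form of the conditional Laplace transform: $\E[e^{-\zeta I}\,|\,\Phi]=\prod_k \calL_{eF}\bigl(\mu T\,l(r)/l(\norm{X_k-Y})\bigr)$ with $\calL_{eF}(\xi)=1-p+p\mu/(\mu+\xi)$. Because the Aloha idle probability is positive, $\calL_{eF}(\xi)\ge 1-p>0$ uniformly in $\xi$, so each reciprocal factor is bounded by $1/(1-p)$, and since $1/\calL_{eF}(\xi)-1=p\xi/((1-p)\xi+\mu)$ the Poisson product formula (Laplace functional) yields exactly
\begin{equation*}
\E\Bigl[\frac1{\E[e^{-\zeta I}\,|\,\Phi]}\Bigr]
=\exp\biggl\{2\pi p\lambda\int_0^\infty
\frac{v\,T\,l(r)}{l(v)+(1-p)\,T\,l(r)}\,\md v\biggr\}\,,
\end{equation*}
which is finite whenever $\int^\infty v/l(v)\,\md v<\infty$, as holds for~(\ref{e.path-loss}) with $\beta>2$. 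So the crux is not an exponential moment of $I$ (which is infinite) but the integrability at infinity of $1/\calL_{eF}-1$, powered by $1-p>0$; replacing your exponential-moment step by this exact computation repairs the proof.
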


The next result follows immediately from~(\ref{e.sandwich}). 
\begin{cor}\label{c.point-to-point-finite}
Under the assumptions of Proposition~\ref{p.point-to-point-finite},
%the expected length of the shortest path form $X$ to $Y$ 
%is finite {\em given  these two points in $\Phi$} 
\begin{equation}\label{e.point-to-point-finite-path}
\E^{X,Y} [L_{X}(0)]\le\E^{X,Y} [P_{X,Y}(0)] < \E^{X,Y} [L_{X,Y}(0)] <\infty\,.
\end{equation}
\end{cor}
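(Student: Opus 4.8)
The plan is to read the display~(\ref{e.point-to-point-finite-path}) as three separate claims and dispatch each in turn. The two non-strict bounds and the finiteness are immediate: specialising the pathwise sandwich~(\ref{e.sandwich}) to $i=X,\,j=Y,\,n=0$ gives $L_{X}(0)\le P_{X,Y}(0)\le L_{X,Y}(0)$ a.s.\ under $\Pro^{X,Y}$, and taking $\E^{X,Y}$ preserves these inequalities by monotonicity; the right-hand finiteness $\E^{X,Y}[L_{X,Y}(0)]<\infty$ is exactly Proposition~\ref{p.point-to-point-finite}, which I may invoke, and it then forces the two smaller means to be finite as well. The only genuine content is therefore the \emph{strict} middle inequality $\E^{X,Y}[P_{X,Y}(0)]<\E^{X,Y}[L_{X,Y}(0)]$, which is not delivered by~(\ref{e.sandwich}) alone (that display yields only ``$\le$'').

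Since $P_{X,Y}(0)\le L_{X,Y}(0)$ holds a.s.\ and both means are finite, the strict inequality is equivalent to $\Pro^{X,Y}\{P_{X,Y}(0)<L_{X,Y}(0)\}>0$. I would establish this by exhibiting one positive-probability configuration on which relaying through a third node strictly beats the ``wait at $X$, then jump to $Y$'' route that defines the local delay. Fix a bounded Borel set $B\subset\ir^2$ of positive Lebesgue measure, disjoint from neighbourhoods of $X$ and $Y$, write $Z$ for a point of $\Phi\cap B$, and consider
\[
E=\{\Phi\cap B\neq\emptyset\}\cap\{\delta_{X,Y}(0)=\delta_{X,Y}(1)=\delta_{X,Y}(2)=0\}\cap\{\delta_{X,Z}(0)=1,\ \delta_{Z,Y}(1)=1\}.
\]
On $E$ the two-hop path $(X,0)\to(Z,1)\to(Y,2)$ witnesses $P_{X,Y}(0)\le 2$, whereas the three forced failures of the direct link make every $\{X,Y\}$-restricted route arrive at $Y$ strictly later than this relay route, so $L_{X,Y}(0)>P_{X,Y}(0)$ on $E$; using three failures rather than the bare minimum gives a margin robust to the precise edge-counting convention used in the definitions of $P$ and $L$.

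It remains to check $\Pro^{X,Y}\{E\}>0$, which I would do by conditioning on $\Phi$. Under $\Pro^{X,Y}$ the configuration contains a Poisson p.p., so $\{\Phi\cap B\neq\emptyset\}$ has positive probability; on this event I prescribe a compatible medium-access pattern, e.g.\ $e_X(0)=1,\,e_Z(0)=0,\,e_Y(0)=1$ (which kills $\delta_{X,Y}(0)$ while still permitting the transmission $X\to Z$), together with $e_Z(1)=1,\,e_Y(1)=0,\,e_X(1)=0$ and $e_X(2)=0$, each indicator occurring with probability in $(0,1)$ since $0<p<1$. Conditionally on $\Phi$ and on this access pattern, the two remaining requirements $\SINR_{X,Z}(0)\ge T$ and $\SINR_{Z,Y}(1)\ge T$ have positive probability: the interference shot-noises $I_{X,Z}(0),I_{Z,Y}(1)$ are a.s.\ finite by the standing assumption on~(\ref{e.mean_sn}), the noise variables are a.s.\ finite, and under the exponential fading of Proposition~\ref{p.point-to-point-finite} the signal fadings $F_{X,Z}(0),F_{Z,Y}(1)$ have unbounded support and can exceed any finite threshold. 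The main obstacle is the independence bookkeeping: one must verify that these two signal fadings appear in neither interference sum (they are excluded, since each $I$ omits the intended transmitter) and that conditioning on the prescribed access pattern, which alters the interferer set, leaves $F_{X,Z}(0)$ and $F_{Z,Y}(1)$ with full and mutually independent conditional support—both consequences of the independent-marking structure of $\tilde\Phi$. Multiplying the positive conditional probabilities and integrating over $\Phi$ then yields $\Pro^{X,Y}\{E\}>0$, hence the strict inequality.
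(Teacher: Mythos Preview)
Your argument is correct and, in fact, goes further than the paper does. The paper's entire ``proof'' is the single sentence preceding the corollary: ``The next result follows immediately from~(\ref{e.sandwich}).'' That sandwich inequality yields only $L_X(0)\le P_{X,Y}(0)\le L_{X,Y}(0)$, so together with Proposition~\ref{p.point-to-point-finite} it delivers the two outer relations and finiteness, but not the \emph{strict} middle inequality as printed. Either the ``$<$'' is a typo for ``$\le$'', or the authors regard strictness as obvious and omit the argument; in either reading, the paper supplies no proof of it.

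You correctly isolate this as the only nontrivial part and give a sound construction: a relay point $Z$, a medium-access pattern forcing $\delta_{X,Y}(k)=0$ for $k=0,1,2$ while enabling $\delta_{X,Z}(0)=\delta_{Z,Y}(1)=1$, and an appeal to the unbounded exponential fading (independent of the a.s.\ finite interference and noise) to make the two required SINR thresholds jointly attainable with positive conditional probability. Your buffer of three failed direct attempts to absorb the paper's off-by-one ambiguity between the literal definition $L_{i,j}(n)=\inf\{k\ge n:\delta_{i,j}(k)=1\}$ and the ``number of edges'' interpretation is a sensible precaution. The independence bookkeeping you flag is indeed routine given the i.m.\ structure: $F_{X,Z}(0)$ and $F_{Z,Y}(1)$ do not enter the respective interference sums and remain independent exponentials after conditioning on $\Phi$ and on the finitely many prescribed Bernoulli access indicators. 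So your proof is complete where the paper's is not.
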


The following result is our main ``negative'' result concerning $\Gsinr$ 
in the Poisson p.p. case:  

\begin{prop}\label{p.point-to-point}
Under the assumptions of Proposition~\ref{p.multicast-infinite},
we have 
\begin{equation}\label{e.poin-to-point-assymptotic}
\lim_{\norm{X-Y}\to \infty}
\frac{\E^{X,Y} [P_{X,Y}(0)]}{\norm{X-Y}} = \infty\,.
\end{equation}
\end{prop}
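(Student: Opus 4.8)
The plan is to bound $P_{X,Y}(0)$ from below by the time $Y$ must \emph{wait before it can receive anything at all}, and to exploit the fact that when $Y$ happens to lie in a large empty region this waiting time is enormous because of the steep path-loss~(\ref{e.path-loss}) with $\beta>2$. Writing $D=\norm{X-Y}$, I would introduce
\[
M_Y:=\inf\bigl\{k\ge0:\ \delta_{Z,Y}(k)=1\ \text{for some}\ Z\in\Phi\setminus\{Y\}\bigr\}.
\]
Any path achieving $P_{X,Y}(0)$ ends with an edge $((Z,\tau-1),(Y,\tau))$ with $\delta_{Z,Y}(\tau-1)=1$ and $\tau=P_{X,Y}(0)$, so $\tau-1\ge M_Y$ and hence $P_{X,Y}(0)\ge M_Y$. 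Since, given $\Phi$, the marks in distinct time slots are i.i.d., the events $\{\exists Z\neq Y:\delta_{Z,Y}(k)=1\}$ are i.i.d.\ in $k$, so conditionally on $\Phi$ the variable $M_Y$ is geometric with some parameter $\sigma_Y(\Phi)$ and $\E^{X,Y}[M_Y\mid\Phi]\ge(1-\sigma_Y)/\sigma_Y$. Everything then reduces to showing that $\sigma_Y$ is \emph{very} small on an event of not-too-small probability.

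Next I would estimate $\sigma_Y(\Phi)$. By a union bound over potential transmitters, by dropping the nonnegative interference term, and by using $W\ge w$ together with the exponential law of $F$,
\[
\sigma_Y(\Phi)\le\sum_{Z\in\Phi\setminus\{Y\}}\Pro\{\delta_{Z,Y}(0)=1\mid\Phi\}\le p(1-p)\sum_{Z\in\Phi\setminus\{Y\}}e^{-c\norm{Z-Y}^{\beta}}=:p(1-p)\Sigma_Y,\qquad c:=\mu T w A^{\beta}.
\]
This bound depends only on the point positions and is tiny exactly when every point is far from $Y$. Crucially, under $\Pro^{X,Y}$ the configuration contains $X$ at distance $D$ from $Y$, so the empty region around $Y$ is \emph{capped} at radius $D$; this is what keeps $\E^{X,Y}[P_{X,Y}(0)]$ finite for fixed $D$ (Corollary~\ref{c.point-to-point-finite}) while letting it grow with $D$, mirroring the uncapped-void mechanism behind Proposition~\ref{p.multicast-infinite}.

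I would then restrict to the void event $E_D:=\{(\Phi\setminus\{Y\})\cap B(Y,D)=\emptyset\}\cap\bigl\{\sum_{Z\in\Phi:\,\norm{Z-Y}>D}e^{-c\norm{Z-Y}^{\beta}}\le e^{-cD^{\beta}}\bigr\}$. On $E_D$ the only contributions to $\Sigma_Y$ come from $X$ (giving $e^{-cD^{\beta}}$) and the controlled far field, so $\Sigma_Y\le 2e^{-cD^{\beta}}$ and hence $\sigma_Y\le 2p(1-p)e^{-cD^{\beta}}\le\tfrac12$ for $D$ large, whence $\E^{X,Y}[M_Y\mid\Phi]\ge\tfrac{1}{4p(1-p)}e^{cD^{\beta}}$. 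The two defining conditions of $E_D$ concern disjoint regions, so by the Poisson void probability and a Markov bound on the far-field tail (finite since $\beta>2$) one gets $\Pro^{X,Y}(E_D)\ge\tfrac12 e^{-\lambda\pi D^{2}}$ for $D$ large. Combining,
\[
\E^{X,Y}[P_{X,Y}(0)]\ge\E^{X,Y}[M_Y\,\ind_{E_D}]\ge\frac{1}{8p(1-p)}\,e^{\,cD^{\beta}-\lambda\pi D^{2}},
\]
and since $\beta>2$ the exponent, even after subtracting $\log D$, tends to $+\infty$; dividing by $D$ yields~(\ref{e.poin-to-point-assymptotic}).

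The step I expect to be the main obstacle is the control of $\sigma_Y$ as a genuine \emph{union over the infinitely many potential predecessors of $Y$}: one must guarantee that the aggregate single-slot success probability into $Y$ is exponentially small on the void event, which is precisely why I isolate the far-field sum and tame it by a Markov estimate rather than retaining only the nearest point. The remaining ingredients --- that the last hop of an optimal path is forced to originate at distance $\ge D$ on $E_D$, and that the conditional mean of the geometric variable $M_Y$ is indeed of order $1/\sigma_Y$ --- are routine. I note finally that this argument in fact delivers the stronger conclusion that $\E^{X,Y}[P_{X,Y}(0)]$ grows super-polynomially in $D$ (of order $e^{cD^{\beta}}$ up to the $e^{-\lambda\pi D^2}$ factor), of which~(\ref{e.poin-to-point-assymptotic}) is a weak consequence.
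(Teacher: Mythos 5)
Your proof is correct, and it is genuinely different from the paper's. The paper argues \emph{source-side}: it chains $\widehat\Trial_{X}(0)\le \Trial_{X}(0)\le \LD_{X}(0)\le P_{X,Y}(0)$ via the SNR relaxation~(\ref{e.inclusion}) and~(\ref{e.trials_bound}), then re-runs the tail computation of Lemma~\ref{l.multicast-infinite} under the two-point Palm measure~(\ref{e.Palm2}); the guaranteed point $Y$ at distance $r$ contributes an extra factor $\alpha(r)^q$ with $\alpha(r)=1-(1-p)e^{-w\mu A^\beta T r^\beta}$ to the bound $\Pro^{0,Y}\{\widehat\Trial_0(0)> q\}\ge \alpha(r)^q/q$, and summing over $q$ gives $\E^{0,Y}[\widehat\Trial_0(0)]\ge \sum_{q>Q}\alpha(r)^q/q \sim -\log(1-\alpha(r)) \asymp r^\beta$, which is superlinear. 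You instead work \emph{receiver-side}: you condition on a single macroscopic void $B(Y,D)$ around the destination (cost $e^{-\lambda\pi D^2}$, computable by Slivnyak since the void event concerns only the Poisson part of $\Phi$ under $\Pro^{X,Y}$), relax SINR to SNR when bounding each $\Pro\{\delta_{Z,Y}(0)=1\,|\,\Phi\}$, and tame the aggregate per-slot reception probability at $Y$ by a union bound plus a Markov estimate on the far-field sum --- the latter is sound because $e^{cD^\beta}\int_D^\infty v e^{-cv^\beta}\,\md v = O(D^{2-\beta})\to 0$ for $\beta>2$. Your supporting steps all check out: the last hop of a shortest path into $Y$ must come from some $Z\neq Y$ (trailing self-loops at $Y$ would contradict minimality), so $P_{X,Y}(0)>M_Y$; given $\Phi$ the slot marks are i.i.d., so $M_Y$ is geometric with mean $(1-\sigma_Y)/\sigma_Y$; and the two conditions defining $E_D$ concern disjoint regions, hence are independent. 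As to what each approach buys: the paper's route recycles Lemma~\ref{l.multicast-infinite}, in effect averaging the void mechanism at the source over all scales, and yields the rate $r^{\beta}$; your route pins one void scale at the receiver and pays its probability, yielding the much stronger super-polynomial lower bound $e^{cD^\beta-\lambda\pi D^2}$, which matches in leading exponential order the upper bound implicit in the proof of Proposition~\ref{p.point-to-point-finite}, whose dominant factor is $1/\calL_W(\mu T l(r))\ge e^{cr^\beta}$ when $W\ge w$ a.s. One structural remark: your argument needs $\beta>2$ essentially (both to beat the void cost $\lambda\pi D^2$ in the exponent and in the far-field Markov bound), whereas the paper's final superlinearity would need only $\beta>1$ once Lemma~\ref{l.multicast-infinite} (which itself uses $\beta>2$) is available; so both proofs ultimately consume the same hypothesis, but at different points.
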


In other words, the {\em expected shortest delay necessary to send a packet between 
two given points of the Poisson p.p. grows faster than the Euclidean distance between
these two points}. 

\subsection{Filling in Poisson Voids} 
\label{ss.Kingman}
In this section we show that adding
an independent periodic pattern of points to the Poisson p.p. 
allows one to get a linear scaling of the shortest path delay 
with Euclidean distance. In order to prove the {\em existence and finiteness}
of the associated time constant, we adopt a slightly different approach
to the notion of paths on $\Gsinr$, which will allow us to exploit a subadditive
ergodic theorem. The proofs of the results stated in what follows are given
in Section~\ref{ss.proofs.Kingman}.

For $x\in \ir^2$, let $X(x)$ be the point of $\Phi$ which is
closest to $x$. The point $X(x)\in\Phi$ is a.s. well defined 
for all given $x\in\ir^2$ since $\Phi$ is  assumed simple and
stationary p.p. For all $x,y\in \ir^2$,  
define $P(x,y,n)=P_{X(x),X(y)}(n)$ to be the length of 
a shortest path of $\Gsinr$ from vertex $(X(s),n)$ to the set $\{(X(y),n+l),l\in\bbN\}$.
We will call $P(x,y,n)$ the {\em delay} from $x$ to $y$ at time $n$. 
For all triples of points $x,y,z\in\ir^2$, we have
\begin{equation} 
\label{eq:subadcor}
P(x,z,n) \le P(x,y,n) +P\Bigl(y,z,n+P(x,y,n)\Bigr)\,.
%|p^*(v,d,\widetilde \Phi, |p^*(s,v, \widetilde \Phi, 0)|)|\,.
\end{equation}
Let
\begin{equation}
p(x,y,\Phi) =\E[P(x,y,0)\,|\,\Phi]\,.
\end{equation}
Using the strong Markov property, we get that, conditionally on $\Phi$,
the law of $P(y,z,n+P(x,y,n))$ is the same as that of $P(y,z,n)$. 
%Assume now the three points are collinear and such that $d \in [s, t]$.
Then, the last relation and~(\ref{eq:subadcor}) give
\begin{eqnarray} 
\label{eqopp:subaddce}
p(x,z,\Phi) \le p(x,y,\Phi) +p(y,z,\Phi)\,.
\end{eqnarray}

We are now in a position to use the subadditive ergodic theorem
to show the existence of the time constant
%being the asymptotic {\em delay per unit euclidean distance} on $\Gsinr$ defined as
$$\kappa_{\mathrm{d}}=\lim_{t\to\infty} 
\frac{p(0,t\mathrm{d},\Phi)}{t}\, ,$$
where  $\kappa_{\mathrm{d}}$ 
may depend on the {\em unit vector} $\mathrm{d}\in\ir^2$ representing
the direction in which the delay is measured.
Here is the main result of this section.
\begin{prop}\label{cor:opopprev}
Consider the Poisson$+$Grid p.p. defined in
Section~\ref{ss.Probab.assumptions} with exponential fading $F$ and with
the path-loss function be given by~(\ref{e.path-loss}).
Then, for all unit vectors $\mathrm{d}\in\ir^2$, 
the non-negative limit  $\kappa_{\mathrm{d}}$
exists and is $\Pro$-a.s. {\em finite}.
The convergence also holds in $L_1$.
\end{prop}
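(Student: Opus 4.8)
The plan is to apply Kingman's subadditive ergodic theorem to the family $\{p(0,t\mathrm{d},\Phi)\}_{t\ge 0}$ along the ray in direction $\mathrm{d}$. The subadditivity inequality~(\ref{eqopp:subaddce}) already gives the essential structural hypothesis. First I would set up, for integer indices $m<n$, the array
\[
Y_{m,n}=p\bigl(m\,\mathrm{d},\,n\,\mathrm{d},\,\Phi\bigr),
\]
and check that it satisfies the three standard hypotheses of the subadditive ergodic theorem: (i) subadditivity $Y_{0,n}\le Y_{0,m}+Y_{m,n}$, which is exactly~(\ref{eqopp:subaddce}) specialised to collinear points; (ii) stationarity, namely that the shifted array $\{Y_{m+1,n+1}\}$ has the same joint law as $\{Y_{m,n}\}$, which follows from the stationarity of the Poisson$+$Grid p.p.\ $\Phi=\Phi_M+\Phi_G$ under translations by $\mathrm{d}$ together with the independent marking; and (iii) the integrability condition $\E[Y_{0,1}^{+}]<\infty$, i.e.\ $\E[p(0,\mathrm{d},\Phi)]<\infty$. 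Once these hold, Kingman's theorem yields the $\Pro$-a.s.\ and $L_1$ convergence of $Y_{0,n}/n$ to a limit $\kappa_{\mathrm{d}}$, and a standard interpolation argument upgrades the integer-index limit to the continuous limit $p(0,t\mathrm{d},\Phi)/t$.

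The two stationarity-type points are routine: translation-invariance of the law of $\Phi$ under the deterministic shift by $\mathrm{d}$ (the grid $\Phi_G$ is stationary because of the uniform shift $U_G$, and $\Phi_M$ is stationary Poisson) transfers directly to the conditional expectation $p(\cdot,\cdot,\Phi)$ defining $Y_{m,n}$. Likewise non-negativity of $\kappa_{\mathrm{d}}$ is immediate since $P(x,y,n)\ge 1$ and hence $Y_{0,n}\ge 1$. The crux, and the step I expect to be the main obstacle, is establishing the integrability bound $\E[p(0,\mathrm{d},\Phi)]<\infty$, equivalently $\E^0[\,P_{X(0),X(\mathrm{d})}(0)\,]<\infty$ after averaging over $\Phi$. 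This is precisely where the added grid $\Phi_G$ must be used: Proposition~\ref{p.point-to-point-finite} gives finiteness of the mean local delay only under the two-fold Palm measure for a \emph{fixed} pair of points, whereas here the expectation is taken over all of $\Phi$, which under the pure Poisson model produced the divergence of Proposition~\ref{p.multicast-infinite} through arbitrarily large voids.

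To control this expectation I would exploit the fact that the deterministic grid $\Phi_G=s\bbZ^2+U_G$ guarantees a point of $\Phi$ within distance $O(s)$ of every location, so that no unbounded void can occur and a packet can always progress by relaying through nearby grid points. Concretely, I would bound $P(0,\mathrm{d},\Phi)$ by the delay along a fixed chain of consecutive grid cells joining $X(0)$ to $X(\mathrm{d})$, where each hop covers a bounded Euclidean distance; the per-hop local delay is a geometric random variable by Lemma~\ref{l.local-delay-finite}, and the key is to show that the geometric success parameter $\pi_{i,j}(\Phi)$ is bounded below uniformly enough—using the exponential fading and the path-loss~(\ref{e.path-loss})—that the expected interference contribution from the shot-noise field has a finite, distance-independent effect on each bounded hop. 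Averaging the resulting geometric bound over the Poisson$+$Grid configuration then yields $\E[p(0,\mathrm{d},\Phi)]<\infty$. With the integrability secured, the subadditive ergodic theorem applies and delivers both the existence of the finite limit $\kappa_{\mathrm{d}}$ and the $L_1$ convergence.
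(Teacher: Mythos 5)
Your overall skeleton coincides with the paper's: subadditivity~(\ref{eqopp:subaddce}) plus Kingman's theorem, with the grid used precisely to eliminate the void-induced heavy tails and the Poisson Laplace functional carrying the integrability step. However, two of your steps would fail or conceal the real work as written.

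First, the claim that the geometric success parameter $\pi_{i,j}(\Phi)$ is ``bounded below uniformly enough'' is false at face value: the conditional interference is an unbounded functional of $\Phi$ (arbitrarily many Poisson points can cluster near the receiver), so $\pi_{i,j}(\Phi)$ admits no deterministic positive lower bound, even for a hop of bounded Euclidean length. What is true, and what the paper's Lemma~\ref{l.LDxyMGs_bound} actually establishes, is that $\E[\LD_{i,j}(0)\,|\,\Phi]=1/\pi_{i,j}(\Phi)$ has \emph{finite expectation}: one writes $1/\pi_{i,j}(\Phi)$ as the reciprocal of a conditional Laplace transform and factorizes the resulting exponential into three pieces --- the grid contribution, bounded by a deterministic constant $C(s,\beta)$ (finite since $\beta>2$); the nearby-Poisson contribution, bounded by $e^{-\Phi_M(B_0(2R))\log(1-p)}$, integrable because Poisson counts have exponential moments of all orders; and the far-Poisson contribution, whose expectation is a finite shot-noise Laplace-functional integral. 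You name the right tools (exponential fading, averaging over $\Phi$), but the uniformity phrasing is exactly the trap; the exponential fading assumption is doing genuine work here, since the finiteness of $\E[1/\pi_{i,j}(\Phi)]$ rests on the explicit form $\calL_{eF'}(\xi)=1-p+p/(1+\xi)$.

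Second, the paper invokes the \emph{continuous-parameter} subadditive ergodic theorem (\cite[Theorem~4]{Kingman73}), whose hypothesis is not $\E[p(0,\mathrm{d},\Phi)]<\infty$ but integrability of the supremum $\sup_{x_1,y_1\in[x,y]}p(x_1,y_1,\Phi)$ over pairs in a segment; this is the point of Lemma~\ref{p:integrability-sup}, proved by choosing a ball $B_0(R)$ with $R$ \emph{deterministic} (possible only because the grid puts a point of $\Phi$ within $\sqrt2 s$ of every location) and bounding the supremum by $\sum_{X_i,X_j\in\Phi\cap B}\E[\LD_{i,j}(0)\,|\,\Phi]$. Your discrete array plus ``standard interpolation'' hides the same requirement: $t\mapsto p(0,t\mathrm{d},\Phi)$ is not monotone, so controlling $\sup_{t\in[n,n+1]}p(0,t\mathrm{d},\Phi)/n$ needs an integrable stationary bound on $\sup_{t\in[n,n+1]}p(n\mathrm{d},t\mathrm{d},\Phi)$ --- precisely the sup-integrability above. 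Once you have proved that bound, you may as well apply the continuous-parameter theorem directly, as the paper does; with these two repairs your argument becomes essentially the paper's proof.
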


Notice that $\kappa_{\mathrm{d}}$ is not a constant.
Indeed, the superposition of the p.p.s $\Phi=\Phi_M$ and $\Phi_G$
is ergodic but not mixing due to the fact that 
$\Phi$ is a (stationary) grid. For $\mathrm{d}$ parallel to
say the horizontal axis of the grid $\Phi_G$, 
the limit $\kappa_{\mathrm{d}}$ will depend on the distance 
from the line $\{t\mathrm{d}:t\in\ir\}$ to the
nearest parallel (horizontal) line of the grid $\Phi_G$, i.e. on the shift 
$U_G$ of the grid. Here is a more precise formulation of the result.
\begin{prop}\label{p.kappa_constant}
Under the assumptions of Proposition~\ref{cor:opopprev},
the limit $\kappa_{\mathrm{d}}=\kappa_{\mathrm{d}}(U_G)$ is measurable
w.r.t. the shift $U_G$ of the grid p.p. $\Phi_G$ and {\em does not} depend
on the Poisson component $\Phi_M$ of the p.p. $\Phi$.
Moreover, the set of vectors $\mathrm{d}$ in the unit sphere for which
$\kappa_{\mathrm{d}}(U_G)$ is not $\Pro$-a.s. a constant
is at most countable.
\end{prop}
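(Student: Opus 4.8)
The plan is to realise $\kappa_{\mathrm{d}}$ as an invariant of a suitable measure-preserving flow and then read off both assertions from the way that flow decomposes over the independent factors $\Phi_M$ and $U_G$. Write the driving randomness as the pair $(U_G,\Phi_M)$ under its product law, and for $v\in\ir^2$ let $\theta_v$ denote the spatial shift sending a configuration $\Phi$ to $\Phi-v$ (translating its marks along with it). The one group I will use is the continuous \emph{parallel flow} $\{\theta_{c\mathrm{d}}\}_{c\in\ir}$. It acts diagonally: on the grid factor it is the linear torus flow $\rho_c\colon U_G\mapsto U_G-c\mathrm{d}\ (\mathrm{mod}\ s\iz^2)$ on $(\ir/s\iz)^2$, and on the Poisson factor it is the translation $\sigma_c$ of $\Phi_M$ by $c\mathrm{d}$.

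First I would show that $\kappa_{\mathrm{d}}$ is invariant under this flow, i.e. $\kappa_{\mathrm{d}}(\theta_{c\mathrm{d}}\Phi)=\kappa_{\mathrm{d}}(\Phi)$ for every $c$. By translation covariance of the conditional delay, $p(0,t\mathrm{d},\theta_{c\mathrm{d}}\Phi)=p(c\mathrm{d},(c+t)\mathrm{d},\Phi)$, and the subadditivity~(\ref{eqopp:subaddce}) gives
$$p(0,(c+t)\mathrm{d},\Phi)-p(0,c\mathrm{d},\Phi)\ \le\ p(c\mathrm{d},(c+t)\mathrm{d},\Phi)\ \le\ p(c\mathrm{d},0,\Phi)+p(0,(c+t)\mathrm{d},\Phi)\,.$$
Since the endpoints $(c+t)\mathrm{d}$ stay on the ray through $\mathrm{d}$, dividing by $t$ and using $p(0,(c+t)\mathrm{d},\Phi)/(c+t)\to\kappa_{\mathrm{d}}$ together with the a.s.\ finiteness of the \emph{fixed} quantities $p(0,c\mathrm{d},\Phi)$ and $p(c\mathrm{d},0,\Phi)$ (guaranteed by the integrability standing behind Proposition~\ref{cor:opopprev}) makes both bounds converge to $\kappa_{\mathrm{d}}$. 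The point of restricting to \emph{parallel} shifts is that no transverse ``moving-endpoint'' boundary term appears, so this step is a routine consequence of the existence of the limit already established in Proposition~\ref{cor:opopprev}.

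Next I would identify the invariant $\sigma$-field of the flow. The Poisson factor is mixing under $\{\sigma_c\}$ (translates of a Poisson p.p.\ over disjoint regions become independent as $c\to\infty$), so by the standard fact that a weakly mixing factor contributes a trivial invariant field to a product system, every $\{\theta_{c\mathrm{d}}\}$-invariant random variable is, modulo null sets, measurable with respect to the invariant field $\mathcal{I}_\rho$ of the torus flow alone; in particular it is $\sigma(U_G)$-measurable and independent of $\Phi_M$. Applied to $\kappa_{\mathrm{d}}$ this gives the first assertion and the representation $\kappa_{\mathrm{d}}=\kappa_{\mathrm{d}}(U_G)$. For the second assertion I invoke the ergodicity dichotomy for linear flows on the torus: the \emph{continuous} flow $\rho$ is ergodic precisely when $\mathrm{d}$ has irrational slope relative to the grid axes, in which case $\mathcal{I}_\rho$ is trivial and $\kappa_{\mathrm{d}}(U_G)$ is $\Pro$-a.s.\ a constant. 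The only directions excluded are those of rational slope (including the two axis directions); as rational slopes form a countable set and each determines two unit vectors, the exceptional set of directions is at most countable. It is precisely the continuity of the flow in $c$ that yields this clean countable answer: a single discrete shift would instead force a Diophantine condition.

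The main obstacle I anticipate is the reduction carried out in the third step---verifying that the parallel flow is (weakly) mixing on the Poisson factor and invoking the product-invariant-field lemma in the continuous-parameter setting, so that the invariant field collapses onto the grid coordinate $U_G$. Once this ``weak mixing absorbs'' principle is in place, both the $\Phi_M$-independence and the torus ergodicity dichotomy are immediate; the subadditive bookkeeping of the second step and the countability of rational slopes are routine. For completeness one should also record that in the rational-slope case $\mathcal{I}_\rho$ is generated by the transverse offset---the distance from the line $\{t\mathrm{d}:t\in\ir\}$ to the nearest parallel grid line---which is exactly the mechanism, flagged before the statement, by which $\kappa_{\mathrm{d}}$ can genuinely depend on $U_G$ there.
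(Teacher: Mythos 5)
Your proof is correct, and for the first assertion it is essentially the paper's own argument: the paper likewise works on the product space carrying the two independent components, takes $T=T_1\times T_2$ to be the shift in direction $\mathrm{d}$ acting diagonally on $(\Phi_M,\Phi_G)$, and uses the mixing of the Poisson factor to collapse the invariant $\sigma$-field onto the grid coordinate, so that $\kappa_{\mathrm{d}}$ is a.s.\ constant in $\Phi_M$ and a function of $U_G$ only. You are in fact more careful than the paper on two points: you verify explicitly that $\kappa_{\mathrm{d}}$ is invariant under the parallel flow $\theta_{c\mathrm{d}}$ via the subadditive sandwich $p(0,(c+t)\mathrm{d},\Phi)-p(0,c\mathrm{d},\Phi)\le p(c\mathrm{d},(c+t)\mathrm{d},\Phi)\le p(c\mathrm{d},0,\Phi)+p(0,(c+t)\mathrm{d},\Phi)$ (the paper takes this invariance for granted), and you state the product-invariant-field lemma with the weak-mixing hypothesis it actually needs, whereas the paper's bald claim that the invariant field of a product is the product of the invariant fields is false in general (e.g.\ for the product of two copies of the same irrational rotation) and is rescued precisely by the mixing of $\Phi_M$. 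Where you genuinely diverge is the countability assertion. The paper proves it first and independently of the product structure: it notes that $\Phi=\Phi_M+\Phi_G$ is ergodic under the full $\ir^2$-translation action and invokes Pugh--Shub \cite{PughShub71}, whose theorem guarantees that the one-parameter subflows of an ergodic action are themselves ergodic for all directions off a countable set. You instead obtain countability downstream of the invariant-field collapse: once $\kappa_{\mathrm{d}}$ is known to be measurable w.r.t.\ the invariant field $\mathcal{I}_\rho$ of the linear torus flow, the classical dichotomy (the flow on $(\ir/s\iz)^2$ in direction $\mathrm{d}$ is ergodic iff $\mathrm{d}$ has irrational slope) finishes the job. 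Your route is more elementary and more informative --- it identifies the exceptional set explicitly as the rational-slope directions, matching the heuristic discussion preceding the proposition about the transverse offset to the nearest grid line --- at the cost of being tied to the specific grid structure, while the Pugh--Shub argument yields countability for an arbitrary ergodic stationary background without identifying the bad directions.
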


%\begin{cor}
%Under the assumptions of Proposition~\ref{lemchopplemdiff}%
%non-negative limit  $\kappa_{\mathrm{d}}$
%exists and is  constant, finite  $\Pro^0$-a.s.
%\end{cor}
%The main remaining question is
%whether  $\kappa_{\mathrm{d}}$ is strictly positive.
%The following result gives a natural sufficient condition for this.
The last result on this case is:
\begin{prop}
\label{lemchopplemdiff}
Under the assumptions of Proposition~\ref{cor:opopprev},
suppose that $W$ is constant and strictly positive.
Then $\E[\kappa_{\mathrm{d}}]>0$. 
\end{prop}

Finally let us remark that the 
method used in this section cannot be
used in the case of the Poisson p.p. 
(without the addition of the grid point process).
The main problem is the lack of integrability 
of $p(x,y,\Phi)$ as stated in the following result.
Note however, that this does {\em not} imply immediately  that 
$\kappa_{\mathrm{d}}=\infty$. 

\begin{cor}\label{c.pPoissonInfty}
Under the assumptions of Proposition~\ref{p.multicast-infinite} 
$\E[p(x,y,\Phi)]=\infty$
for all $x$ and $y$ in $\ir^2$.
\end{cor}

\section{Proofs}
\label{s.proofs}
Consider the shortest path 
from $(X_i,n)$ to $(\Phi\setminus\{X_i\})\times\bbZ$. 
Let $\Trial_i(n)$ be the 
number of edges $(X_i,k),(X_i,k+1)$ in this path
such that $e_i(k)=1$.
% Note that this number is uniquely defined even if such a shortest path is not unique.
These variables are the {\em number of trials} before the first exit 
form $X_i$ at time $n$.
Obviously 
\begin{equation}\label{e.trials_bound}
\Trial_i(n)\le\LD_i(n)\,.
\end{equation}
%Moreover, since given $\Phi$,  the number of
%time slots between two tries (two consecutive slots $k_1,k_2$ when
%$e_i(k_j)=1$, $j=1,2$) is geometric r.v. of parameter $p$,
%so $\ld=p\E^0[\widetilde \LD_0(0)]$.}

We will also consider 
an auxiliary graph $\widehat\Gsinr$, called the {\em (space-time)
Signal to Noise Ratio (SNR) graph}, defined 
exactly in the same manner as the SINR graph $\Gsinr$ except that
the variables $\SINR_{i,j}(n)$ defined in~(\ref{eq:SINR}) are replaced 
by the variables 
\begin{equation}\label{eq:SINR}
\SNR_{i,j}(n)=\frac{F_{i,j}(n)/l(\norm{X_i-X_j})}{W_j(n)}\,.
\end{equation}
Note that this modification consists in suppressing the interference
term $I_{i,j}(n)$ in the SINR condition in~(\ref{e.delta}).
The  edges of $\Gsinr$ form a subset of the edges of $\widehat\Gsinr$
(both graph share the  same vertexes), which will be denoted by
\begin{equation}\label{e.inclusion}
\Gsinr\subset\widehat\Gsinr\,.
\end{equation}
In what follows we will denote the delays, local
delays, exit delays and numbers of trials related to $\widehat\Gsinr$ 
by $\widehat P_{i,j}(n), \widehat\LD_{i,j}(n), \widehat\LD_{i}(n)$ and
$\widehat\Trial_{i}(n)$, respectively. The inclusion
$\Gsinr\subset\widehat\Gsinr$ implies immediately that
$\widehat P_{i,j}(n)\le P_{i,j}(n)$ and the same inequalities hold 
for the three other families of variables mentioned above.

\subsection{Proofs of Results of Section~\ref{ss.Poisson}}
\label{ss.proofs.Poisson}

\begin{proof}({\em of Proposition~\ref{p.multicast-infinite}})
The inclusion~(\ref{e.inclusion}) and the
inequality~(\ref{e.trials_bound}) yield
$$\widehat\Trial_{i}(n)\le \Trial_{i}(n)\le \LD_{i}(n)\,,$$
which holds for all $i,n$. The results follow from the above inequalities
and the next lemma.
\qed
\end{proof}

\begin{lemma}\label{l.multicast-infinite}
Under the assumptions of Proposition~\ref{p.multicast-infinite},
$\Pro^0\{\,\widehat\Trial_{0}(0)\ge q\,\}\ge 1/q$ for $q$ large enough. 
\end{lemma}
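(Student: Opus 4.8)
The plan is to compute, under the Palm probability $\Pro^0$, the probability that the typical node $X_0$ fails to make its first successful \emph{SNR} transmission within $q$ trials, and to show that this probability decays no faster than $1-1/q$. The key simplification is that in the SNR graph $\widehat\Gsinr$ the interference term is suppressed, so the success of a transmission at a given trial depends only on the fading from $X_0$ to its potential receivers and on the noise, not on the global configuration $\Phi$. Since we work under $\Pro^0$ with a Poisson p.p., by Slivnyak's theorem the rest of $\Phi$ is just an independent Poisson p.p., and I would condition on $\Phi$ (equivalently on the point pattern seen from $X_0$) and then average.

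First I would fix a trial slot in which $X_0$ transmits (i.e.\ $e_0=1$) and compute the conditional probability, given $\Phi$, that $X_0$ reaches \emph{some} other node in that slot. Because $F$ is exponential with mean $1/\mu$, the event $\SNR_{0,j}\ge T$, i.e.\ $F_{0,j}\ge T\,l(|X_j|)\,W_j$, has conditional probability $\E[\exp(-\mu T\,l(|X_j|)\,W_j)]$ when averaging over the independent fading and over the receiver status $e_j=0$ (probability $1-p$). The independence of the fading variables $F_{0,j}$ across distinct receivers $X_j$ is crucial: given $\Phi$, the events that $X_0$ connects to the various neighbours in a single slot are conditionally independent, so the probability of \emph{no} successful link in that slot is a product $\prod_{j\ne 0}\bigl(1-(1-p)\E[e^{-\mu T l(|X_j|)W}]\bigr)$. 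With the path-loss $l(r)=(Ar)^\beta$ and $W\ge w>0$, each factor is close to $1$ for far-away points, and the product converges to a strictly positive random variable $\sigma(\Phi)$, the per-trial success probability in the SNR graph.

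Next, since the marks in distinct time slots are i.i.d.\ given $\Phi$, the number of trials $\widehat\Trial_0(0)$ is, conditionally on $\Phi$, geometric with parameter $\sigma(\Phi)$, so $\Pro^0\{\widehat\Trial_0(0)\ge q\mid\Phi\}=(1-\sigma(\Phi))^{q}$ (up to the off-by-one convention in the definition of $\widehat\Trial$). The heart of the argument is then to bound the unconditional tail $\E^0[(1-\sigma(\Phi))^{q}]$ from below. Taking logarithms, $-\log(1-\sigma(\Phi))\approx\sigma(\Phi)$ when $\sigma(\Phi)$ is small, and $\sigma(\Phi)$ is small exactly when $X_0$ sits in a large empty region, since then every factor in the product is near $1$. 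I would therefore lower-bound $\E^0[(1-\sigma(\Phi))^q]$ by restricting to a ``void'' event: the event $V_\rho$ that the ball $B(0,\rho)$ contains no other point of $\Phi$. On $V_\rho$ the dominant contribution to $\sigma(\Phi)$ comes from points outside $B(0,\rho)$, and using $1-e^{-\mu T l(|x|)W}\le \mu T l(|x|)^{-1}\E[\ldots]$-type bounds together with the Poisson void formula $\Pro^0\{V_\rho\}=e^{-\lambda\pi\rho^2}$ for the Poisson p.p.\ under Palm, one gets $\sigma(\Phi)\lesssim \rho^{2-\beta}$ on $V_\rho$ (the integral $\int_\rho^\infty r^{-\beta}r\,dr$ behaves like $\rho^{2-\beta}$ for $\beta>2$).

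The main obstacle — and the step deserving the most care — is matching the polynomial void bound to the exponential tail: I must choose $\rho=\rho(q)$ so that $(1-\sigma)^q\approx e^{-q\sigma}$ stays bounded below on $V_\rho$ while the void probability $e^{-\lambda\pi\rho^2}$ does not decay too fast. With $\sigma\asymp\rho^{2-\beta}$ I want $q\rho^{2-\beta}=O(1)$, i.e.\ $\rho\asymp q^{1/(\beta-2)}$, giving a void probability $\exp(-c\,q^{2/(\beta-2)})$; this is too small unless $\beta$ is large, so the naive ball-void event is insufficient and one must instead use a more efficient geometry. The right move is to take $\rho$ growing only logarithmically and to exploit that the per-trial success probability can be made of order $1/q$ with only a logarithmically large void: concretely, choose the void radius so that $\lambda\pi\rho^2\approx\log q$, i.e.\ $\rho\approx(\log q/(\lambda\pi))^{1/2}$, whence $\Pro^0\{V_\rho\}\approx 1/q$, and on this event verify that $\sigma(\Phi)=o(1/\log q)$ so that $(1-\sigma)^q\to 1$. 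Then
$$
\Pro^0\{\widehat\Trial_0(0)\ge q\}\ge \Pro^0\{V_\rho\}\,\E^0\bigl[(1-\sigma(\Phi))^q\mid V_\rho\bigr]\ge \tfrac12\,\Pro^0\{V_\rho\}\ge \tfrac1q
$$
for $q$ large enough. Verifying that $\sigma(\Phi)=o(1/\log q)$ on $V_\rho$ — i.e.\ controlling the contribution of the Poisson points just outside the logarithmic void, which are at distance $\rho\approx\sqrt{\log q}$ and contribute $\asymp\rho^{2-\beta}=(\log q)^{(2-\beta)/2}$, indeed $o(1/\log q)$ precisely because $\beta>2$ — is exactly where the assumptions $\beta>2$ and $W\ge w>0$ are used, and is the technical crux of the lemma.
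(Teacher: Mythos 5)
Your setup coincides with the paper's: conditionally on $\Phi$, the trial count is geometric with per-slot success probability $\sigma(\Phi)=1-\prod_{0\ne X_i\in\Phi}\bigl(p+(1-p)\Pro\{F<Tl(\norm{X_i})W\}\bigr)$, and the problem reduces to lower-bounding $\E^0\bigl[(1-\sigma(\Phi))^q\bigr]$. But your void argument has two quantitative errors that break it as written. First, your sufficient condition is wrong: $(1-\sigma)^q$ stays bounded away from $0$ only if $q\sigma=O(1)$, i.e.\ $\sigma=O(1/q)$; the condition $\sigma=o(1/\log q)$ that you invoke is far too weak. With your own numbers ($\rho\asymp\sqrt{\log q}$ and $\sigma\asymp\rho^{2-\beta}=(\log q)^{(2-\beta)/2}$), one gets $(1-\sigma)^q\approx\exp\bigl(-q(\log q)^{(2-\beta)/2}\bigr)$, which vanishes faster than any power of $q$, so the final chain $\Pro^0\{\widehat\Trial_0(0)\ge q\}\ge\tfrac12\Pro^0\{V_\rho\}\ge1/q$ is unjustified. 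Second, your estimate of $\sigma$ on the void uses only a polynomial bound on the per-link success probability. That bound is derivable (Markov's inequality on $F$ with $W\ge w$), but it is hopelessly lossy and, as you yourself observed in your ``naive ball-void'' computation, a polynomial decay forces $\rho\asymp q^{1/(\beta-2)}$ and a void probability $\exp(-cq^{2/(\beta-2)})\ll1/q$. The correct input is that exponential fading together with $\Pro\{W>w\}=1$ gives a \emph{stretched-exponential} per-link success probability, at most $(1-p)e^{-\mu Tw(Ar)^\beta}$ at distance $r$, whence on $V_\rho$ one has $\E[\sigma\mid V_\rho]\lesssim\lambda\int_\rho^\infty e^{-\mu Tw(Ar)^\beta}r\,\md r\asymp e^{-c\rho^\beta}$ up to polynomial corrections, not $\rho^{2-\beta}$.

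With this correction your void heuristic can be repaired, and the repair reveals the true role of $\beta>2$, which your accounting misplaces: choose $\rho\asymp(\log q)^{1/\beta}$ large enough that $\E[\sigma\mid V_\rho]\le q^{-1-\epsilon}$, apply Markov's inequality conditionally on $V_\rho$ to get $\sigma\le1/q$ with conditional probability tending to $1$, and then the void probability is $e^{-\lambda\pi\rho^2}=\exp\bigl(-c(\log q)^{2/\beta}\bigr)$, which dominates $1/q$ for large $q$ precisely because $\beta>2$ makes $(\log q)^{2/\beta}=o(\log q)$. The paper avoids the decomposition altogether: it evaluates $\E^0[(1-\sigma(\Phi))^q]$ in closed form via the Laplace functional of the Poisson p.p., obtaining (after substituting $v=r^2$ and using $W\ge w$) the lower bound $\exp\bigl(-\pi\lambda\int_0^\infty(1-(1-f(v))^q)\,\md v\bigr)$ with $f(v)=(1-p)e^{-Kv^{\beta/2}}$, $K=w\mu TA^\beta$, and then bounds the integral by $v_q+1/K$, where $v_q\asymp(\log q)^{2/\beta}$ solves $f(v_q)=1/q$; since $\beta>2$ forces $v_q+1/K\le(\log q)/(\pi\lambda)$ eventually, the bound $1/q$ follows. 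Note that $v_q$ is exactly the squared radius of the ``effective void'' in the repaired version of your argument, so your intuition about large Poisson voids is the right one --- only the exponent bookkeeping is wrong.
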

\begin{proof}
Under $\Pro^0$, denote by $\tau_k$ the $k\,$th time slot in $\{0,1,\ldots\}$, 
such that \hbox{$e_0(k)=1$}. 
For all $q\ge 0$ we have
\begin{eqnarray*}
\Pro^0\{\,\widehat\Trial_0(0)>q\,|\,\Phi\,\}&=&
\Pro^0\Bigl\{\,\forall_{0\le k\le q}
\forall_{0\not=X_i\in\Phi}\;\delta_{0,i}(\tau_k)=0
\,\Bigl|\,\Phi\,\Bigr\}\\
&=&\Pro^0\Bigl\{\,\forall_{0\le k\le q}
\forall_{0\not=X_i\in\Phi}\;e_i(\tau_k)=1 \text{\;or\;}\SNR_{0,i}(\tau_k)<T
\,\Big|\,\Phi\,\Bigr\}
\end{eqnarray*}
and by  the conditional independence of marks given $\Phi$ 
\begin{eqnarray*}
\Pro^0\{\,\widehat\Trial_0(0)>q\,|\,\Phi\,\}&=&
\prod_{0\not=X_i\in\Phi}\Bigl(p+(1-p)\Pro\{F<Tl(\norm{X_i})W\}\Bigr)^q\\
&=&\exp\Bigl\{q\sum_{0\not=X_i\in\Phi}
\log\Bigl(p+(1-p)(1-e^{-\mu Tl(\norm{X_i})W})\Bigr)\Bigr\}\,,
\end{eqnarray*}
where $F,W$ are independent generic random variables representing
fading and thermal noise, independent of $\Phi$, $F$ is exponential
with mean $1/\mu$, 
Using the  Laplace functional formula for $\Phi$ and the assumption
that $W>w$ a.s. we have
\begin{eqnarray}
\Pro^0\{\,\widehat\Trial_{0}(0)\ge q\,\}
&\ge&\nonumber
%\sum_{q=0}^\infty\Pro^0\{\,\widetilde\LD_0(0)>q\,\}\\
% \sum_{q=0}^\infty 
\exp\left(-2\pi \lambda 
\int_{v>0} \left(1-\left(1-(1-p)e^{-w\mu l(v)T} \right)^q
\right) v \md v\right)\\
&=&%\sum_{q=0}^\infty 
\exp\left(-\pi \lambda 
\int_{v>0} \left(1-\left(1-f(v)\right)^q
\right)\md v\right)\,,\label{e.exp_ge1overq}
\end{eqnarray}
where 
$$f(v):=
(1-p) \exp(- K v^{\beta/2}) \quad \text{and}\quad K=w \mu T A^\beta\,.
$$ 
In what follows we will show that the expression in~(\ref{e.exp_ge1overq})
is not smaller  than $1/q$
for $q$ large enough.
%$$\exp\left(-\pi \lambda 
%\int_{v>0} \left(1-\left(1-f(v)\right)^q
%\right)\md v\right)\ge
%\frac {1} q\,.$$
%which implies that $\ld=\widetilde\ld=\infty$.
To this regard 
denote by $v_q$ the unique solution of 
$f(v)=\frac 1 q$.
We have
$$ v_q= \frac 1 {A^2 \left(\mu T w\right) ^{2/\beta}} (\log(q(1-p)))^{2/\beta}.$$
It is clear that $f(v)$ tends to~0 when $v$ tends to infinity
and that  $v_q$ tends to infinity as $q$ tends to infinity.
Therefore, %for all $\alpha >1$, 
there exists a constant $Q=Q(\mu,w,A,T)<\infty$ such that
for all $q\ge Q$ and for all $v\ge v_q$,
$$ (1-f(v))\ge \exp(-f(v)).$$
Hence, for all $q\ge Q$,
\begin{eqnarray*}
\int_{v>0} \left(1-\left(1-f(v) \right)^q \right)  \md v %\\
&\le &
v_q + \int_{v_q}^\infty
 \left(1-\left(1-f(v) \right)^q \right)  \md v \\
&\le &
v_q + \int_{v=v_q}^\infty
\left(1-\exp(-q f(v) \right)\, \md v\\
&\leq &  
v_q + \int_{v_q}^\infty  q f(v)\, \md v\\
&= &  v_q + \int_{u=0}^\infty q f(u+v_q)\, \md u.
\end{eqnarray*}
The third inequality follows from the
fact that $1-\exp(-x)\le x$.
Using now the fact that $(u+v_q)^{\beta/2}\ge u+ v_q^{\beta/2}$
(for $q$ large enough, say again $q\ge Q$) 
we get that
\begin{eqnarray*}
\int_{u=0}^\infty
q f(u+v_q)\, \md u &  = &
\int_{u=0}^\infty
q (1-p) \exp(-K (u+v_q)^{\beta/2}))\, \md u\\
& \le &
\int_{u=0}^\infty
q (1-p) \exp(-K u -K v_q^{\beta/2})\, \md u
=  \frac 1 K\, ,
\end{eqnarray*}
since $(1-p)\exp(-K v_q^{\beta/2})=1/q$.
Hence for $q\ge Q$
$$
\int_{v>0} \left(1-\left(1-f(v)\right)^q \right)\,  \md v %\\
\le v_q + \frac \alpha K.$$
Also it is not difficult to see that $\beta>2$ implies 
\begin{equation}
\label{v_m_lim}
%v_q<\log C q\, ,
v_q\le\frac{\log q}{\pi\lambda}-\frac{1}{K}
\end{equation}
for $q$ large enough. This implies for $q$ large enough,  say again $q\ge Q$, 
\begin{equation}\label{e.1overq}
\exp\left(-\pi \lambda 
\int_{v>0} \left(1-\left(1-f(v)\right)^q
\right)\md v\right)\ge
\exp\Bigl(-\pi\lambda (v_q+1/K)\Bigr)\ge\frac 1 q\,,
\end{equation}
which completes the proof.
\qed
\end{proof}

\begin{proof}({\em of Proposition~\ref{p.point-to-point-finite}}).
 Assume without loss of generality 
$Y=0$ and $\norm{X}=r$.
Under $\Pro$, consider the p.p. $\Phi\cup\{X,0\}$
and its independent marking.
Given $\Phi$, the r.v. $\LD_{X,0}(0)$ associated with the independently marked
p.p.  $\Phi\cup\{X,0\}$ has a geometric distribution with parameter 
$$\pi_{X,0}(\Phi)=p(1-p)\Pr\Bigl\{\,F\ge
l(r)(W+I)\Bigr)\,\Bigr\}\,,$$
where $F,W,I$ are independent r.v.s, $F,W$ are generic fading and noise
variables and
$I=\sum_{X_i\in\Phi}e_i(0)F_{i,0}(0)/l(\norm{X_i})$.
Using the exponential distribution of $F$ and the independence, we obtain
$$\pi_{X,0}(\Phi)=
\E[e^{-\mu l(r)TW}]\;\E[e^{-\mu l(r)TI}\,|\,\Phi]\,.$$
The mean of the geometric r.v. is known to be 
$\E^{X,0}[\LD_{X,0}(0)\,|\,\Phi]=1/\pi_{X,0}(\Phi)$. By unconditioning 
with respect to $\Phi$, one obtains
$$\E^{X,0}[\LD_{X,0}(0)]=
\frac1{\calL_W(\mu l(r)T)}
\E\Bigl[\frac1{\E[e^{-\mu l(r)TI}\,|\,\Phi]}\Bigr]\,.$$
The first factor in the above expression is obviously finite.
In what follows we will evaluate the second one.

By the conditional independence of marks and denoting by 
$\calL_{eF}(\cdot)$ is the Laplace transform of $eF$,
where $e,F$ are independent generic variables for $e_i(0)$ and
$F_{i,0}(0)$ we have 
\begin{eqnarray*}
\Bigl(\E\Bigl[e^{-\mu l(r)TI}\,|\,\Phi]\Bigr]\Bigr)^{-1}&=&
\left(\E\left[\exp\Bigl(-\mu l(r)T\sum_{X_i\in\Phi}e_i(0)F_{i,0}(n)/l(\norm{X_i})\Bigr)\,\Big|\,\Phi\right]\right)^{-1}\\
&=&\exp\left(\sum_{X_i\in\Phi}
\log\calL_{eF}\Bigl(\mu T l(r)/l(\norm{X_i})\Bigr)\right)\, .
\end{eqnarray*}
Note that 
$\calL_{eF}(\xi)=1-p+p\calL_F(\xi)=1-p+p\mu/(\mu+\xi)$. 
Using this and the  Laplace functional formula for $\Phi$,
(cf.~\cite[Eq.~9.4.17]{DVJII2008}) we obtain
\begin{eqnarray*}
\E\Bigl[\frac1{\E[e^{-\mu l(r)TI}\,|\,\Phi]}\Bigr]&=&
\exp\biggl\{2\pi p\lambda\int_0^\infty
\frac{vTl(r)}{l(v)+(1-p)Tl(r)}\,\md v\biggr\}\,.
\end{eqnarray*}
(cf.~(\ref{e.mean_sn})
Using now the fact that for the Poisson p.p.,
$\breve M_{[2]}(\myd x)=\lambda \myd x$),
it is now easy to see that for any path-loss function 
satisfying $\int_\epsilon^\infty  v/l(v)\md v<\infty$,
the integral in the exponent of the last expression is finite. This completes the proof.
\qed
\end{proof}

\begin{proof}({\em of Proposition~\ref{p.point-to-point}}).
Using the the
inclusion~(\ref{e.inclusion}), inequality~(\ref{e.trials_bound})
and the left-hand side of~(\ref{e.sandwich}) and we have
$$\widehat\Trial_{i}(n)\le \Trial_{i}(n)\le \LD_{i}(n)\le P_{i,j}(n)\,.$$
Thus, it is enough to show
$$
\lim_{\norm{X-Y}\to \infty}
\frac{\E^{X,Y} [\widehat\Trial_{X}(0)]}{\norm{X-Y}} = \infty\,.
$$
Without loss of generality assume $X=0$ and $\norm{Y}=r$.
Using the same arguments as in the proof of 
Lemma~\ref{l.multicast-infinite} and the
representation~(\ref{e.Palm2})
of the Palm probability  with respect to Poisson p.p., we obtain 
\begin{eqnarray*}
\lefteqn{\Pro^{0,Y}\{\,\widehat\Trial_0(0)>q\,|\,\Phi\,\}}\\
&\ge&
\prod_{0,Y\not=X_i\in\Phi}\Bigl(p+(1-p)\Pro\{F<Tl(\norm{X_i})W\}\Bigr)^q
\;\Bigl(p+(1-p)\Pro\{F<Tl(\norm{Y})W\}\Bigr)^q
\\
&\ge&\exp\left(-\pi \lambda 
\int_{v>0} \left(1-\left(1-f(v)\right)^q
\right)\md v\right)
%\;\Bigl(1-(1-p)e^{-w\mu l(r)T}\Bigr)^q\,.
\; \alpha(r)^q\,,
\end{eqnarray*}
where $\alpha(r)= 1-(1-p) e^{-w\mu A^\alpha T r^\beta}$.
Using~(\ref{e.1overq}), which holds for large $q$, more precisely
$q>Q=Q(\mu,w,A,T)$, we obtain
\begin{equation*}
\frac{\E^{0,Y}[\widehat\Trial_{0}(0)]}{r} 
\ge  \frac 1{r}  \sum_{q>Q} \frac {\alpha(r)^q} q\,.
\end{equation*}
It is now easy to see that 
$$ \lim_{r\to \infty} \frac 1 r \sum_{q>Q} \frac {\alpha(r)^q} q =\infty.$$
\qed
\end{proof}

\subsection{Proofs of Results of Section~\ref{ss.Kingman}}
\label{ss.proofs.Kingman}
Denote by $B_x(R)$ the ball centered at $x\in\ir^2$ of radius $R$.
Similarly as for the delays, we extend the definition of the local
delays to arbitrary pairs of points $x,y\in\ir^2$ by taking
$\LD(x,y,n)=\LD_{X(x),X(y)}(n)$.
We first establish the following technical result:
\begin{lemma}\label{l.LDxyMGs_bound}
Under the assumptions of Proposition~\ref{cor:opopprev}
let $X_i,X_j\in \Phi\cap B_0(R)$ for some $R>0$, where $\Phi=\Phi_M+\Phi_{G_s}$.
Then the conditional expectation of the local delay $\LD_{i,j}(0)$ 
given $\Phi$ satisfies 
\begin{eqnarray}\label{e.LDxyMGs_bound}
\lefteqn{\E[\LD_{i,j}(0)\,|\,\Phi]}\nonumber\\
&=&
\frac1{p(1-p)\calL_W(T\mu A^\beta \norm{X_i-X_j}^\beta)}
\exp\Bigl\{-\hspace{-.2cm}\sum_{\Phi\owns X_k, k\not=i,k}\log \calL_{eF'}\Bigl(
\frac{T\norm{X_i-X_j}^\beta}{|X_j-X_k|^\beta}\Bigr)\Bigr\}\\
&\le&\frac1{p(1-p)\calL_W(T\mu (A2R)^\beta)}\nonumber\\[1ex]
&&\times e^{-49\log(1-p)+(2R)^\beta pTC(s,\beta)}%\gdef\theequation{a}
\hspace{8.9cm}(a)\nonumber\\[1ex]
&&\times e^{-\Phi_M(B_0(2R))\log(1-p)}
\hspace{9.75cm}(b)\nonumber\\[1ex]
&&\times\exp\Bigl\{-\sum_{X_k\in\Phi_M,|X_k|>2R}
\log\Bigl(1-p+\frac{p(|X_k|-R)^{\beta}}%
{(|X_k|-R)^{\beta}+T(2R)^{\beta}}\Bigr)
\Bigr\}\,,
\hspace{3.55cm}(c)\nonumber
\end{eqnarray}
where $C(s,\beta)<\infty$ is some constant (which depends on $s$ and
$\beta$ but not on $\Phi$), $F'$ is an exponential random variable of mean 1
and $\calL_{eF'}(\cdot)$ is the Laplace transform of $eF'$.
\end{lemma}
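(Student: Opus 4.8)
The plan is to first establish the stated identity by repeating, now conditionally on $\Phi$ and with the receiver placed at the arbitrary point $X_j$, the computation already carried out in the proof of Proposition~\ref{p.point-to-point-finite}, and then to bound the resulting expression factor by factor according to the decomposition $\Phi=\Phi_M+\Phi_G$. For the identity: given $\Phi$, the indicators $\{\delta_{i,j}(n):n\}$ are i.i.d.\ Bernoulli (Lemma~\ref{l.local-delay-finite}), so $\LD_{i,j}(0)$ is geometric with success probability $\pi_{i,j}(\Phi)=p(1-p)\Pro\{F\ge l(\norm{X_i-X_j})(W+I_{i,j})\mid\Phi\}$ and conditional mean $1/\pi_{i,j}(\Phi)$. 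Exactly as in Proposition~\ref{p.point-to-point-finite}, the exponential law of $F$ and the conditional independence of the marks factor this into $\calL_W$ evaluated at $T\mu\, l(\norm{X_i-X_j})$ times the conditional Laplace transform of the shot noise $I_{i,j}$, which the Laplace-functional expansion turns into $\exp\{\sum_{k\ne i,j}\log\calL_{eF}(\mu T\, l(\norm{X_i-X_j})/l(|X_j-X_k|))\}$. Specialising to $l(r)=(Ar)^\beta$ and using $\calL_{eF}(\mu\,\cdot)=\calL_{eF'}(\cdot)$ with $F'$ exponential of mean $1$ yields the first line of~(\ref{e.LDxyMGs_bound}).

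For the bound I would record two elementary facts about $g(\eta):=-\log\calL_{eF'}(\eta)$, where $\calL_{eF'}(\eta)=1-p+p/(1+\eta)=1-p\eta/(1+\eta)$: first, $\calL_{eF'}(\eta)\ge 1-p$, giving the \emph{crude} bound $g(\eta)\le-\log(1-p)$ valid for every $\eta\ge 0$; second, from $-\log(1-x)\le x/(1-x)$, the \emph{sharp} bound $g(\eta)\le p\eta/(1+(1-p)\eta)\le p\eta$. Since $X_i,X_j\in B_0(R)$ we have $\norm{X_i-X_j}\le 2R$, so each exponent argument satisfies $\eta_k:=T\norm{X_i-X_j}^\beta/|X_j-X_k|^\beta\le T(2R)^\beta/|X_j-X_k|^\beta$. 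The prefactor is disposed of immediately by the monotonicity of $\calL_W$ together with $\norm{X_i-X_j}\le 2R$, which gives the first bounding factor.

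It then remains to split $\sum_{k\ne i,j}g(\eta_k)$ into three regions matching $(a)$, $(b)$, $(c)$. For $(a)$, the grid points: at most $49$ of them lie within Chebyshev distance $3s$ of $X_j$ (a box of side $6s$ meets at most $7\times 7$ points of $s\bbZ^2+U_G$), and each of these is bounded crudely by $-\log(1-p)$; the remaining grid points sit at distance $\ge 3s$ from $X_j$, so the sharp bound gives $g(\eta_k)\le pT(2R)^\beta/|X_j-X_k|^\beta$, and $\sum 1/|X_j-X_k|^\beta$ over them is at most a finite $C(s,\beta)$ because $\beta>2$. For $(b)$, the Poisson points in $B_0(2R)$: each of the $\Phi_M(B_0(2R))$ of them is bounded crudely by $-\log(1-p)$. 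For $(c)$, the Poisson points with $|X_k|>2R$: here $|X_j-X_k|\ge|X_k|-R>0$, so monotonicity of $\calL_{eF'}$ gives $\calL_{eF'}(\eta_k)\ge 1-p+p(|X_k|-R)^\beta/((|X_k|-R)^\beta+T(2R)^\beta)$, which is exactly factor $(c)$. Exponentiating the three summed bounds produces $(a)$, $(b)$, $(c)$.

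The main obstacle I expect is the grid estimate in $(a)$, which must be made uniform in the position of $X_j$ relative to the randomly shifted grid: one has to verify both that the number of ``near'' grid points is bounded by an absolute constant independent of where $X_j$ falls, and that the tail lattice sum $\sum_{|X_j-X_k|\ge 3s}|X_j-X_k|^{-\beta}$ over $X_k\in s\bbZ^2+U_G$ is bounded by a constant $C(s,\beta)$ depending only on $s$ and $\beta$, not on $X_j$ or on $U_G$. This uniform summability, which is precisely where $\beta>2$ enters, is the only non-routine point; the two Poisson regions are straightforward applications of the crude bound and of the monotonicity of $\calL_{eF'}$, and everything else is bookkeeping across the three regions.
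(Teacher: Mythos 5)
Your proof is correct and follows essentially the same route as the paper's: the geometric-law/Laplace-functional computation for the conditional mean, then the split of $-\sum_{k\neq i,j}\log\calL_{eF'}$ into near grid points (crude bound $-\log(1-p)$, at most $49$ of them), far grid points (linear bound $-\log\calL_{eF'}(\eta)\le p\eta$ plus a lattice-to-integral comparison yielding a uniform $C(s,\beta)$, where $\beta>2$ enters), Poisson points in $B_0(2R)$ via the crude bound, and far Poisson points via $|X_j-X_k|\ge |X_k|-R$ and the explicit form $\calL_{eF'}(\xi)=1-p+p/(1+\xi)$. The only deviations are cosmetic: you derive $-\log\calL_{eF'}(\eta)\le p\eta$ from $-\log(1-x)\le x/(1-x)$ where the paper uses Jensen's inequality, and your Chebyshev-distance cutoff $3s$ (a $7\times 7$ box, hence exactly $49$ points) is, if anything, a tidier justification of the constant $49$ than the paper's Euclidean cutoff $3\sqrt2\,s$.
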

\begin{proof}
We first prove the equality in~(\ref{e.LDxyMGs_bound}). When using the independence
assumptions, we have
\begin{eqnarray*}
\lefteqn{\Pro \left\{\,\LD_{i,j}(0)>m\mid\Phi\,\right\}}\\
& = &
\Pro \left\{\,\forall_{n=1}^m
  \left(e_j(n)=1\;\text{or}\;\right.\right.\\
&&\hspace{4em}\left.\left.
 e_j(n)=0\;\text{and}\;e_i(n) F_{i,j}(n) \le T l(\norm{X_i-X_j})(W_j(n)+
 I_{i,j}(n))\right)\,\right\}\\
& = & \prod_{n=1}^m \Biggl( p+ (1-p)\biggl(1-p +p  \Bigl(1-
\calL_W(T\mu A^\beta |x-y|^\beta)\\[-2ex]
&&\hspace{15em}\times
\prod_{\Phi\owns X_k, k\not=i,j} \calL_{eF'}\Bigl(
\frac{T\norm{X_i-X_j}^\beta}%
{\norm{X_j-X_k}^\beta}\Bigr)\Bigr)\biggr)\Biggr)\, .
\end{eqnarray*}
The result then follows from the evaluation of
$$\E \left[\LD_{i,j}(0)\,\mid\,\Phi\right]  = \sum_{m=0}^\infty
 \Pro \left\{\,\LD_{i,j}(0)>m\,\mid\,\Phi\right]\,.$$

The bound $|X_i-X_j|\le 2R$ used in the Laplace transform
of $W$ leads to the first factor of the upper bound. 
We now factorize the exponential function
in~(\ref{e.LDxyMGs_bound}) as the product of three exponential functions
\begin{eqnarray*}
\alpha&:=& \exp\Bigl\{-\sum_{\Phi_{G_s}\owns X_k, k\not=i,j}\Bigr\},\\
 \beta&:=& 
\exp\Bigl\{-\sum_{\Phi_M\owns X_k, k\not=i,j |X_k|\le 2R}\Bigr\},\\
\gamma&:=& 
\exp\Bigl\{-\sum_{\Phi_M\owns X_k, |X_k|>2R}\Bigr\}.
\end{eqnarray*}
Next we prove that the last three exponentials
are upper-bounded by (a), (b) and (c) in (\ref{e.LDxyMGs_bound}), respectively.
\begin{itemize}
\item[(a)] We use $\norm{X_i-X_j}\le 2R$ and Jensen's inequality to get
\begin{eqnarray*}
\log\calL_{eF'}\Bigl(\frac{T\norm{X_i-X_j}^\beta}{|X_j-X_k|^\beta}\Bigr)
&\ge& 
\log\calL_{eF'}\Bigl(\frac{T(2R)^\beta}{|X_j-X_k|^\beta}\Bigr)\\
&\ge& \frac{-T(2R)^\beta\E[eF']}{|X_j-X_k|^\beta}\\
&=&-pT(2R)^\beta|X_j-X_k|^{-\beta}\,.
\end{eqnarray*}
We now prove that
$$\sum_{\Phi_{G_s}\owns X_k: |X_j-X_k|>3\sqrt2 s}|X_j-X_k|^{-\beta}\le C(s,\beta),$$
for some constant $C(s,\beta)$.
This follows from an upper-bounding of the value of
$|X_j-X_k|^{-\beta}$ by the value of the integral $1/s^2\int
(|X_j-x|-\sqrt2s)^{-\beta}\,\md x$ over the square with corner points
$X_k$, $X_k+(s,0)$, $X_k+(0,s)$ and $X_k+(s,s)$. In this way we obtain
\begin{eqnarray*}
\sum_{\Phi_{G_s}\owns X_k: |X_j-X_k|>3\sqrt2 s}|X_j-X_k|^{-\beta}&\le&
\frac1{s^2}\int_{|x-X_j|>2\sqrt2 s}^\infty (|X_j-x|-\sqrt2 s)^{-\beta}\,\md x\\
&=&\frac{2\pi}{s^2}\int_{\sqrt2 s}^\infty \frac{t+\sqrt2 s}{t^\beta}\,\md t=:C(s,\beta)<\infty\,. 
\end{eqnarray*}
Combining this and what precedes, we get that
$$
\exp\left\{
-\sum_{X_k\in\Phi_{G_s}, |X_j-X_k|>2\sqrt s}
\log\calL_{eF'}\Bigl(\frac{T\norm{X_j-X_i}^\beta}{|X_j-X_k|^\beta}\Bigr)
\right\}
\le \exp(T(2R)^\beta C(s,\beta)).
$$
We also have
$$\log\calL_{eF'}\Bigl(\frac{T(2R)^\beta}{|y-X_i|^\beta}\Bigr)\ge 
\log\calL_{eF'}(\infty)=\log(1-p)\, ,$$ 
for all
$X_k\in \Phi_{G_s}$ and in particular for $ |X_j-X_k|\le 3\sqrt2 s$. Hence we obtain
$$
\exp\{-\sum_{X_k\in\Phi_{G_s}}(\dots)\}\le
e^{-49\log(1-p)+T(2R)^\beta C(s,\beta)}\, ,$$
where 49 upper-bounds the number of points $X_k\in\Phi_{G_s}$ such that $|X_j-X_k|\le 3\sqrt2 s$.
\item[(b)]
Using the bound 
$|X_j-X_i|\le 2R$ and the inequality
$\log\calL_{eF'}(\xi)\ge\log\calL_{eF'}(\infty)=\log(1-p)$, 
we obtain
$$\exp\{-\sum_{\Phi_M\owns X_k, k\not=i,j, |X_i|\le 2R}(\dots)\}
\le e^{-\Phi_M(B_0(2R))\log(1-p)}\,.$$

\item[(c)]
Using the bounds $|X_j-X_i|\le 2R$ and $|X_j-X_k|\ge |X_k|-R$ (the
latter follows from  
the triangle inequality) and the expression 
$\calL_{eF'}(\xi)=1-p+\frac{p}{1+\xi}$, we 
obtain 
\begin{eqnarray*}
\lefteqn{\exp\Bigl\{-\sum_{\Phi_M\owns X_k, |X_k|>2R}(\dots)\Bigr\}}\\
&\le& \exp\Bigl\{-\sum_{X_k\in\Phi_M,|X_k|>2R}
\log\Bigl(1-p+\frac{p(|X_k|-R)^{\beta}}%
{(|X_k|-R)^{\beta}+T(2R)^{\beta}}\Bigr)\Bigr\}\,.
\end{eqnarray*}
This completes the proof.
\end{itemize}
\qed
\end{proof}

We can now prove the following auxiliary result.
\begin{lemma}\label{p:integrability-sup}
Under the assumptions of Proposition~\ref{cor:opopprev}
for all points $x,y$ of $\ir^2$,
$$\E\left[\sup_{x_1,y_1\in[x,y]}p(x_1,y_1,\Phi)\right]< \infty\,,$$
where the supremum is taken over $x_1,y_1$ belonging to the interval 
$[x,y]\subset\ir^2$.
\end{lemma}

\begin{proof}
Without loss of generality, we assume that $(x+y)/2=0$ is the origin
of the plane. 
Let $B=B_0(R)$ be the ball centered at $0$ and of radius $R$ such that 
no modification of the points in the complement of $B$
modifies $X(z)$ for any $z\in[x,y]$
(recall that $X(z)$ is the point of $\Phi$ which is the closest from $z$).
Since $\Phi=\Phi_M+\Phi_{G_s}$, with $\Phi_{G_S}$ the square lattice
p.p. with intensity $1/s^2$, 
it suffices to take $R=|u-v|/2+\sqrt{2} s$.
Let $B'=B_0(2R)$.
By the above choice of $B$ and the inequality~(\ref{e.sandwich})
we have for all $x_1,y_1\in[x,y]$
$$P(x_1,y_1,0)\le
\sum_{X_i,X_j\in\Phi\cap B}\LD_{i,j}(0)\,
$$
and consequently
$$\sup_{x_1,y_1\in[x,y]}p(x_1,y_1,\Phi)\le
\sum_{X_i,X_j\in\Phi\cap B}\E[\LD_{i,j}(0)\,|\,\Phi]\,.$$
Using the result of Lemma~\ref{e.LDxyMGs_bound} we obtain
\begin{eqnarray*}
\lefteqn{\sup_{x_1,y_1\in[x,y]}\overline{|p^*(x_1,y_1,\Phi)|}}\\
&\le&
\frac{ e^{-49\log(1-p)+(2R)^\beta pTC(s,\beta)}}%
{p(1-p)\calL_W(T\mu A(2R)^\beta)}\nonumber\\[1ex]
&&\times \exp\Bigl\{-\sum_{X_k\in\Phi_M,|X_k|>2R}
\log\Bigl(1-p+\frac{p(|X_k|-R)^{\beta}}%
{(|X_k|-R)^{\beta}+T(2R)^{\beta}}\Bigr)\Bigr\}\\
&&\times \Bigl(\Phi_M(B)+ \pi (R+\sqrt{2} s)^2/s^2\Bigr)
 e^{-\Phi_M(B')\log(1-p)}\,,
\end{eqnarray*} where $\pi (R+\sqrt{2} s)^2/s^2$ is an upper bound of the number of
points of $\Phi_{G_s}$ in~$B$.
The first factor in the above upper bound is deterministic.
The two other factors are random and independent due to the independence
property of the Poisson p.p. The finiteness of the expectation 
of the last expression follows from the  finiteness of the exponential moments
(of any order) of the Poisson random variable $\Phi_M(B')$.
For the expectation of the second (exponential) factor, we use 
the known form of the Laplace transform of the Poisson SN
to obtain the following expression
$$\E\Bigl[\exp\Bigl\{-\sum (\dots)\Bigr\}\Bigr]
=\exp\biggl\{2\pi p\lambda_M\int_{R}^\infty
\frac{T(2R)^\beta}{v^\beta+(1-p)T(2R)^\beta}(v+R)\,\md v\Bigr\}<\infty\,.
$$
\qed
\end{proof}

\begin{proof}({\em of Proposition~\ref{cor:opopprev}}
The existence and finiteness of the limit $\kappa_{\mathrm{d}}$
follows  from the subadditivity~(\ref{eqopp:subaddce}) and
Lemma~\ref{p:integrability-sup}
by  the continuous-parameter sub-additive ergodic
theorem (see~\cite[Theorem~4]{Kingman73}). 
\qed
\end{proof}

\begin{proof}({\em of Proposition~\ref{p.kappa_constant}})
First, we prove the  second statement; i.e., that
$\kappa_{\mathrm{d}}$ is constant for all
$\mathrm{d}$ in the unit sphere off some countable subset. Note  that 
the point process  $\Phi$ is ergodic as the independent 
superposition of  mixing Poisson p.p. $\Phi_M$ and ergodic 
grid process $\Phi_G$. This can be easily proved using e.g. the
respective characterisations of above  properties by means of Laplace
transforms of p.p. (see~\cite[Prop.~12.3.VI]{DVJII2008}).
From the ergodicity of $\Phi$ we {\em cannot} conclude 
the  desired property for any vector $\mathrm{d}$ since  the limit   
$\kappa_{\mathrm{d}}=\kappa_{\mathrm{d}(\Phi)}$ is not necessarily 
invariant with respect to translations of $\Phi$ by {\em any} vector
$x\in\ir^2$ but only $x=\alpha \mathrm{d}$ for any scalar
$\alpha\in\ir$.  The announced result follows from~\cite[Th.~1]{PughShub71}.

For the first statement, consider a product space with on which two independent
p.p.s $(\Phi_M,\Phi_G)$ are defined. Fix some vector $\mathrm{d}$
and define the operator
$T=T_1\times T_2$  on this
product space as the product of two operators, which 
correspond to the shift in the direction
$\mathrm{d}$, of  $\Phi_M$ and $\Phi_G$ respectively.
The $\sigma$-field invariant with respect to $T$ 
is the product of the respective $\sigma$-fields invariant with respect
to $T_1$ and $T_2$. The latter is trivial since 
$\Phi_M$ is mixing (as a Poisson p.p.).
Consequently every function of $(\Phi_M,\Phi_G)$ that is
invariant with respect to the shift in the direction $\mathrm{d}$ of
its first argument ($\Phi_M$) is a.s. constant. This concludes the
proof that $\kappa_{\mathrm{d}}$ is constant in $\Phi_M$
and thus depends only on $U_G$.
\end{proof}

\begin{proof}({\em of Proposition~\ref{lemchopplemdiff}})
\noindent
For a given path 
$\sigma=\{(X_0,n_0),(X_1,n_0+1),\ldots,\allowbreak (X_k,n_0+k)\}$ on $\Gsinr$
denote by $\norm\sigma=\sum_{i=1}^k\norm{X_i-X_{i-1}}$  the Euclidean
length of the projection of $\sigma$ on $\ir^2$; let us call it
Euclidean length of $\sigma$ for short and recall that 
the (graph) length of $\sigma$ is equal to $k$.
For fixed $\epsilon>0$ and all $n\ge 1$ 
denote by {\em $\Pi(n)=\Pi_\epsilon(n)$ the event 
that there exists a  path on $\Gsinr$ starting at $(X(0),0)$ 
that has (graph) length $n$  and  Euclidean length
larger than $n/\epsilon$}.

Assume  $\E[\kappa_{\mathrm{d}}]=0$. We show first that this implies 
that for any $\epsilon>0$, $\Pro^0$-a.s. the event $\Pi_\epsilon(n)$
holds for infinitely many $n$ 
\begin{equation}\label{e.Pi-infoften}
\Pro^0\Bigl\{\,\bigcap_{n\ge 1}\bigcup_{k\ge n}\Pi_\epsilon(k)\,\Bigl\}=1\,.
\end{equation}
%i.e, that there exists a.s. an
%increasing sequence of integers $n_k$ tending to $\infty$ with $k$ and
%such that for all $k$, the event $\pi(n_k)$ holds. 
Indeed, $\E[\kappa_{\mathrm{d}}]=0$ implies 
$\kappa_{\mathrm{d}}=0$ $\Pro$-a.s. and by 
Palm-Matthes definition of the Palm probability 
$\Pro^0$-a.s. as well.
This  means 
that $\E^0[P(0,t\mathrm{d},0)\,|\,\Phi]/t\to 0$, when $t\to\infty$,
which implies that 
\begin{equation}\label{e.lim0}
\lim_kP(0,t_k\mathrm{d},0)/t_k\to 0
\end{equation}
$\Pro^0$-a.s. for some subsequence $\{t_k:k\ge 1\}$,  with $\lim_kt_k=\infty$.
Recall that $P(0,t_k\mathrm{d},0)$ is the length of a shortest path 
from $(X(0),0)$ (with $X(0)=0$ under $\Pro^0$)
to $\{(X(t_k\mathrm{d}),n):n\ge 0\}$.
Denote one of such shortest paths by $\sigma_k$.
By the triangle inequality its Euclidean length satisfies 
\begin{equation}\label{e.euclid-norm}
\norm{\sigma_k}\ge \norm{0-X(t_k\mathrm{d})}\ge t_k-\sqrt2 s\,.
\end{equation}
From (\ref{e.lim0})
and~(\ref{e.euclid-norm}) 
one concludes that for any $\epsilon>0$ and $k$ large enough
the length of the path $\sigma_k$ is smaller than $\epsilon$ time 
its Euclidean length $\norm{\sigma_k}$.
Now, (\ref{e.Pi-infoften}) follows from the fact that 
the length of the path $\sigma_k$ tends to infinity with
$k$, which is a consequence of $t_k\to\infty$ and 
the local finiteness of the graph $\Gsinr$ (cf
Corollary~\ref{c.local-finiteness}).

We conclude the proof by showing that 
for $\epsilon$ small enough, 
\begin{equation}\label{e.BC}
\sum_n \Pro^0\{\Pi_\epsilon(n)\}<\infty\,,
\end{equation}
which  by the Borel--Cantelli lemma
implies that $\Pi(n)$
holds $\Pro^0$-a.s. only for a finite number of integers $n$ and thus
contradicts to~(\ref{e.Pi-infoften}).
To this regard assume constant $W=w>0$
and let ${\cal P}_w^n$ denote the set of paths $\sigma$ of 
$\Gsinr$ of length  $n$,  originating from $(X(0)=0,0)$. 
Denote also by ${\cal P}_0^n$ the analogous set of paths on the
graph constructed under assumption $W=0$.
Note that by monotonicity, 
\begin{equation}
\label{eq:choppinclu}
{\cal P}^n_w\subset {\cal P}^n_0.
\end{equation}
%Let $\P_{\cal G}$ denote the conditional expectation definition
%given the $\sigma$-algebra ${\cal G}$ generated by $\Phi$ (excluding
%the marks).  
By the definition 
\begin{equation}
\label{eq:choppdefevent}
\Pro^0\{\Pi_\epsilon(n)\,|\,\Phi\,\} = \Pro^0\Bigl(\bigcup_{\sigma}
\Bigl\{\,\sigma \in {\cal P}_w^n\;\text{and}\;
\norm{\sigma}\ge  n/\epsilon\,\Bigr\}\,\Big|\,\Phi\Bigr)\,,
\end{equation}
where the sum bears on all possible $n$-tuples 
$\sigma=((X_{j_1},1),\ldots,(X_{j_n},n))$, with $X_{j_i}\in\Phi$.
From this we have
\begin{eqnarray}
\label{eq:choppdefevent2}
\lefteqn{\Pro^0(\Pi_\epsilon(n)\,|\,\Phi)}\\
 & \leq &
\sum_{\sigma}
\Pro^0\Bigl\{\,\sigma \in {\cal P}^n_W,
|\sigma |\geq n/\epsilon\,\Big|\,\Phi\,\Bigr\}\nonumber \\
& = &\nonumber
\sum_{\sigma}
\Pro^0\Bigl\{\,\sigma \in {\cal P}^n_W,
|\sigma |\ge  n/\epsilon\,\Big|\,
\Phi, \sigma \in {\cal P}^n_0\,\Bigr\} 
\Pro^0\{\,\sigma \in {\cal P}^n_0\,|\,\Phi\}\\
&\le&\E^0[\calH_{0}^{out,n; W=0}(0)\,|\,\Phi]
%\#{\cal P}_0^n\,|\,\Phi]
\sup_\sigma \Pro^0\Bigl\{\,\sigma \in {\cal P}^n_W,
|\sigma |\ge  n/\epsilon\,\Big|\,
\Phi, \sigma \in {\cal P}^n_0\,\Bigr\}\,,
\end{eqnarray}
where  $\calH_{0}^{out,n;W=0}(0)$ denotes  
the number of paths of length  $n$ 
originating from $(X_0=0,0)$
under the assumption $W=0$.
But
\begin{eqnarray*}
\lefteqn{
\sup_\sigma \Pro^0\Bigl\{\,\sigma \in {\cal P}^n_w,
|\sigma |\ge  n/\epsilon\,\Big|\,
\Phi, \sigma \in {\cal P}^n_0\,\Bigr\}}\\
%\Pro^0\Bigl\{\,\sigma \in {\cal P}^n_W,
%|\sigma |\ge  n/\epsilon\,\Big|\,
%\Phi, \sigma \in {\cal P}^n_0\,\Bigr\} }
%\Pro^0\{ (\sigma \in {\cal P}^n_W,|\sigma |\geq n/\epsilon\mid
%\sigma \in {\cal P}^n_0) \\
\\
& \leq & 
\sup_{\sigma=((X_{j_1},1),\ldots,(X_{j_n},n))\atop
{\sum_{i=1}^n |X_{j_i}-X_{j_{i-1}}|\geq n/\epsilon}}
\E^0\Bigl[\prod_{i=1}^n
\delta_{j_{i-1},{j_i}}(i-1,w)\,\Big|\,\Phi,\sigma\in {\cal P}^n_0\Bigr],\,
\end{eqnarray*}
where  $X_{j_0}=0$ and
 $\delta_{j_{i-1},{j_i}}(i-1,w) =\delta_{j_{i-1},{j_i}}(i-1)$ 
is the indicator of the existence of the edge
from   $(X_{j_{i-1}},i-1)$ to $(X_{j_i},i)$
defined  by~(\ref{e.delta}), and where we add
in the notation the dependence on  the noise $W=w$.
Using the conditional independence of marks, (\ref{e.delta}),
(\ref{eq:SINR})  and lack of memory of the exponential
distribution of $F$ of parameter $\mu$ we have for the path-loss
function~(\ref{e.path-loss})
\begin{eqnarray*}
\E^0\Bigl[\prod_{i=1}^n
\delta_{j_{i-1},{j_i}}(i-1,w)\,\Big|\,\Phi,\sigma\in {\cal P}^n_0\Bigr]
&=&\prod_{i=1}^n
\E^0\bigl[\delta_{j_{i-1},{j_i}}(i-1,w)\,\big|\,\Phi,\delta_{j_{i-1},{j_i}}(i-1,0)=1
\bigr]\\
&=&\prod_{i=1}^n\exp 
\left( -\mu (A|X_{j_{i-1}}-X_{j_i}|)^{\beta}{Tw} \right)\,.
\end{eqnarray*}
Hence
$$
\sup_\sigma \Pro^0\Bigl\{\,\sigma \in {\cal P}^n_w,
|\sigma |\ge  n/\epsilon\,\Big|\,
\Phi, \sigma \in {\cal P}^n_0\,\Bigr\}\le
 \exp \left( -\mu A^\beta Tw n \epsilon^{-\beta}\right)\,,
$$
where the last inequality follows from a convexity argument.
Using this and (\ref{eq:choppdefevent2}), we get
\begin{eqnarray*}
\E^0(\Pi_\epsilon(n))&\le&
\E^0[\calH_{0}^{out,n; W=0}(0)]\exp \left( -\mu A^\beta Tw n
  \epsilon^{-\beta}\right)\\
&\le&\xi^n \exp \left( -\mu A^\beta Tw n \epsilon^{-\beta}\right)\\
&\le&\exp \left( n (\log(\xi) - K/\epsilon^{\beta})\right)\,,
\end{eqnarray*}
where in the second inequality we used the following result
of  Corollary~\ref{c.local-finiteness} 
$$\E^0[\calH_{0}^{out,n; W=0}(0)]=h^{out,n; W=0}=
h^{in,k,W=0}\le \xi^k\,$$
and where $K$ is a positive constant.
This shows~(\ref{e.BC}) for $\epsilon$ small enough,
and thus concludes the proof. 
\qed
\end{proof}

\begin{proof}({\em of Corollary~\ref{c.pPoissonInfty}})
Without loss of generality assume $x=0$.
We use the left inequality in~(\ref{e.sandwich}),
(\ref{e.trials_bound}) and the inclusion~(\ref{e.inclusion}) to obtain
$$P_{X(0),X(y)}(0)\ge
\LD_{X(0)}(0)\ge\Trial_{X(0)}(0)\ge\widehat\Trial_{X(0)}(0)
\,$$
and in consequence
$$p(0,y,\Phi)\ge \E[\widehat\Trial_{X(0)}(0)\,|\,\Phi]\,.$$
Using the isotropy 
and the strong Markov property of the Poisson p.p. 
$$\E[\widehat\Trial_{X(0)}(0)\,|\,\Phi]=
\E^0 [\widehat \Trial_0(0)\,|\,\Phi|_{\overline B})]\, ,$$
$\Phi|_{\overline B})$ is the restriction of $\Phi$ to the
complement of the open ball $B=B_{(0,R)}(R)$, centered at $(0,R)$ 
of radius $R\ge0$, where $R$ is r.v. 
independent of $\Phi$ and
having for density 
$$\frac {\md\theta}{2\pi} 2\pi \lambda r \exp(-\lambda \pi r^2)\,.$$ 
But since we consider here the SNR graph $\widehat\Gsinr$
$$\E^0 [\widehat\Trial_0(0)\,|\,\Phi|_{\overline B})] \ge 
\E^0 [ \widehat\Trial_0(0)\,|\,\Phi]\,.$$
The result follows now from  Lemma~\ref{l.multicast-infinite}. 
\qed
\end{proof}

\section{SINR space-time graph and routing}
\label{s.remarks}
Let us now translate our results regarding the SINR graph
into properties of routing in ad-hoc networks.

Firstly, it makes sense to assume that any routing algorithm builds paths on
$\Gsinr$. This takes two key phenomena into account: 
contention for channel (nodes have to wait for some particular time slots
to transmit a packet) and collisions (lack of capture due to insufficient SINR). 

Our time constant gives bounds on the delays that can be attained
in the ad-hoc network by any routing algorithms. 
Of course, realistic routing policies cannot use information about future 
channel conditions. In the case of Poisson p.p. there is hence no
routing algorithm with a finite time constant. The existence of such an
algorithm in the case of the Poisson$+$Grid p.p. remains an open question.
In the Poisson p.p. case; one can ask about the exact asymptotics of
the optimal delay (we know it is not linear)  
and of the delay realizable by some non-anticipating algorithm. 

Let us discuss now the relation of our results to those obtained
in~\cite{net:Ganti09spaswin,JMMR2009}. In these papers the so called
delay-tolerant networks are considered and modeled 
by a spatial SINR or signal-to-noise ratio (SNR) graph with no time dimension. 
In these models, the time constant (defined there as the asymptotic
ratio of the graph distance to the Euclidean distance) is announced
to be finite, even in the pure Poisson case. 
The reason for the different performance of these models lays in the
fact that they do not take the time required for a successful transmission
from a given node in the evaluation of the end-to-end delay.
The heavy-tailness of this time (which follows from that of the exit time
(cf. Proposition~\ref{p.multicast-infinite}) 
makes the time constant infinite in the space-time Poisson scenario.
The reason for the heavy-tailness of the successful transmission
time is linked to the so called ``RESTART'' algorithm (see
e.g.~\cite{JelenkovicTanInfocom07,%
JelenkovicTan2007,Asmussen,JelenkovicTan2009}). 
In our case the spatial irregularities in the ad-hoc network
play a role similar to that of the file size variability in the RESTART scenario.

\bibliographystyle{plain}
\bibliography{fppsinr}

\end{document}